\newtheorem{theorem}{Theorem}
\newtheorem{corollary}{Corollary}
\newtheorem{lemma}{Lemma}
\newtheorem{definition}{Definition}
\newlength\myindent
\newenvironment{breakablealgorithm}
  {
   \begin{center}
     \refstepcounter{algorithm}
     \hrule height.8pt depth0pt \kern2pt
     \renewcommand{\caption}[2][\relax]{
       {\raggedright\textbf{\ALG@name~\thealgorithm} ##2\par}%
       \ifx\relax##1\relax 
         \addcontentsline{loa}{algorithm}{\protect\numberline{\thealgorithm}##2}%
       \else 
         \addcontentsline{loa}{algorithm}{\protect\numberline{\thealgorithm}##1}%
       \fi
       \kern2pt\hrule\kern2pt
     }
  }{
     \kern2pt\hrule\relax
   \end{center}
  }
\DeclareMathOperator{\dom}{dom}
\DeclareMathOperator{\ext}{ext}
\newcommand{\linprogref}[1]{\textup{(#1)}}
\title[Improved methods for checking avoiding sure loss]{Improved linear programming methods for checking avoiding sure loss}
\author{Nawapon Nakharutai}
\address{Durham University, Department of Mathematical Sciences, UK}
\email{nawapon.nakharutai@durham.ac.uk}
\author{Matthias C. M. Troffaes}
\address{Durham University, Department of Mathematical Sciences, UK}
\email{matthias.troffaes@durham.ac.uk}
\author{Camila C. S. Caiado} 
\address{Durham University, Department of Mathematical Sciences, UK}
\email{c.c.d.s.caiado@durham.ac.uk}
\keywords{avoiding sure loss; linear programming; primal and dual problems; benchmarking; simplex method; affine scaling method; primal-dual method; algorithm}
\begin{document}

\begin{abstract}
We review the simplex method and two interior-point methods (the affine scaling and the primal-dual) for solving linear programming problems for checking avoiding sure loss, and propose novel improvements. We exploit the structure of these problems to reduce their size. We also present an extra stopping criterion, and direct ways to calculate feasible starting points in almost all cases. 
For benchmarking, we present algorithms for generating random sets of desirable gambles that either avoid or do not avoid sure loss. We test our improvements on these linear programming methods by measuring the computational time on these generated sets. We assess the relative performance of the three methods as a function of the number of desirable gambles and the number of outcomes.
Overall, the affine scaling and primal-dual methods benefit from the improvements, and they both outperform the simplex method in most scenarios. We conclude that the simplex method is not a good choice for checking avoiding sure loss. If problems are small, then there is no tangible difference in performance between all methods. For large problems, our improved primal-dual method performs at least three times faster than any of the other methods.
\end{abstract}

\maketitle

\section{Introduction}
In statistical modelling, we often face issues such as limited structural information about dependencies, lack of data, limited expert opinion, or even contradicting information from different experts.
Various authors \cite{1975:williams:condprev,2007:williams:condprev,1991:walley,2014:troffaes:decooman::lower:previsions} have argued that these issues can be handled by modelling our beliefs using \emph{sets of desirable gambles}. A gamble represents a reward (e.g. monetary) that depends on an uncertain outcome.
We can model our beliefs about this outcome by stating a collection of gambles that we are willing to accept. Such set is called a set of desirable gambles. Through duality, every set of desirable gambles is mathematically equivalent to a set of probability distributions.

If there are no combinations of desirable gambles that result in a certain loss, then we say that our set of desirable gambles \emph{avoids sure loss} \cite{1975:williams:condprev,2007:williams:condprev}. To verify whether a set of desirable gambles avoids sure loss, we can solve a linear programming problem \cite[p.~151]{1991:walley}.

Linear programs for checking avoiding sure loss have been studied for instance in \cite{2004:walley:condlowprev,Quaeghebeur2014}. However, these studies focus on forming linear programs and do not mention which algorithms we should use. In the early '90s, Walley \citep[p.~551]{1991:walley} mentioned that  Karmarkar's method can be considered for solving large linear programs. However, nowadays Karmarkar's method is considered obsolete  in favour of other interior point methods such as affine scaling and primal-dual methods \citep{Anstreicher:2009}.

The simplex method is one of the oldest and simplest methods, and the affine scaling method is an improved version of Karmarkar's method, whilst the primal-dual method is currently considered one of the best general purpose methods.
In previous work, we presented an initial comparative study of these three methods for checking avoiding sure loss \cite{2017:Nakharutai:Troffaes:Caiado}.
In that study, we slightly reduced the size of the problems and proposed two improvements: an extra stopping criterion to detect unboundedness more quickly, and a direct way to calculate feasible starting points. There, we also quantified the impact of these improvements \citep[Fig.~1]{2017:Nakharutai:Troffaes:Caiado}, for the primal-dual method.

In this paper, our main goal is to elaborate on the improvements in \cite{2017:Nakharutai:Troffaes:Caiado}, and to further develop efficient algorithms for checking avoiding sure loss. In particular, we study also the dual of each linear program, and we generalise the process to find feasible starting points. We also discuss in detail the advantages and disadvantages of each method for checking avoiding sure loss. For benchmarking, we provide a variety of algorithms for generating sets of desirable gambles. In a simulation study, we generate random sets of desirable gambles and assess the impact of our improvements. In addition, we provide proofs for all relevant results, including some that were stated without proof in \cite{2017:Nakharutai:Troffaes:Caiado}.

The paper is organised as follows. \Cref{sec2} gives a brief outline of avoiding sure loss and coherence. \Cref{sec3} studies several linear programming problems for checking avoiding sure loss, and we slightly reduce the size of these linear programming problems. \Cref{sec4} reviews the simplex, the affine scaling and the primal-dual algorithms, and studies how we can improve these algorithms to check avoiding sure loss. \Cref{sec7,sec7next} present some algorithms for generating random sets of desirable gambles. \Cref{sec8} compares the efficiency of our improved methods.
\Cref{sec10} concludes the paper.

\section{Desirable gambles and lower previsions}\label{sec2}

In this section, we explain desirable gambles, lower previsions, avoiding sure loss, coherence, and natural extension \cite{1991:walley}. We also introduce the notation used throughout.

\subsection{Avoiding sure loss}

Let $\Omega$ be a finite set of uncertain outcomes. A \emph{gamble} is a bounded real-valued function on $\Omega$. Let $\mathcal{L}(\Omega)$ denote the set of all gambles on $\Omega$. Let $\mathcal{D} $ be a finite set of gambles that a subject decides to accept; we call $\mathcal{D}$ the subject's \emph{set of desirable gambles}.
The desirability axioms essentially state that a non-negative combination of desirable gambles should not produce a sure loss \citep[\S 2.3.3]{1991:walley}.
In that case, we say that $\mathcal{D}$ avoids sure loss.

\begin{definition}\citep[\S 3.7.1]{1991:walley}\label{def:1}
  A set $\mathcal{D}\subseteq\mathcal{L}(\Omega)$ is said to \emph{avoid sure loss} if for
  all $ n \in \mathbb{N}$, all  $\lambda_{1}, \dots,\lambda_{n} \geq 0$, and all $f_{1}, \dots,f_{n} \in \mathcal{D}$,
\begin{equation}\label{eq:2.1}
\max_{\omega \in \Omega}\left(\sum_{i=1}^{n} \lambda_{i}f_{i}(\omega)\right)  \geq 0.
\end{equation}
\end{definition}

We can also model uncertainty via acceptable buying (or selling) prices for gambles. A \emph{lower prevision} $\underline{P}$ is a real-valued function defined on some subset of $\mathcal{L}(\Omega)$. We denote the domain of $\underline{P}$ by $\dom\underline{P}$. Given a gamble $f\in\dom\underline{P}$, we interpret $\underline{P}(f)$ as a subject's supremum buying price for $f$.

\begin{definition}\citep[\S 2.4.2]{1991:walley}\label{def:47}
  A lower prevision $\underline{P}$ is said to \emph{avoid sure loss} if for
  all $ n \in \mathbb{N}$, all  $\lambda_{1}, \dots,\lambda_{n} \geq 0$, and all $f_{1}, \dots,f_{n} \in \dom\underline{P}$,
\begin{equation}\label{eq:3.1}
\max_{\omega\in \Omega} \left( \sum_{i=1}^{n} \lambda_{i}\left[f_{i}(\omega)-\underline{P}(f_{i})\right] \right) \geq 0. 
\end{equation}  
\end{definition}

Any lower prevision $\underline{P}$ induces a \emph{conjugate upper prevision} $\overline{P}$ on $-\dom \underline{P}\coloneqq\{-f\colon f\in \dom \underline{P}\}$, defined by $\overline{P}(f)\coloneqq -\underline{P}(-f)$ for all $f\in -\dom\underline{P}$ \citep[\S 2.3.5]{1991:walley}. $\overline{P}(f)$ represents a subject's infimum selling price for $f$.
$\underline{P}$ is said to be \emph{self-conjugate} when $\dom\underline{P}=-\dom\underline{P}$ and $\underline{P}(f) = \overline{P}(f)$ for all $f \in \dom P$. We simply call a self-conjugate lower prevision $\underline{P}$ a \emph{prevision} and write it as $P$
\cite[p.~41]{2014:troffaes:decooman::lower:previsions}.

\subsection{Coherence}\label{coherent}

Coherence is another rationality condition for lower previsions and is stronger than avoiding sure loss. Coherence requires that the subject's supremum buying prices for gambles cannot be increased by considering any finite non-negative linear combination of other desirable gambles \cite[\S 2.5.2]{1991:walley}. In \cref{sec7}, we will use coherent lower previsions to generate sets of desirable gambles that avoids sure loss.

\begin{definition}\cite[\S 2.5.4]{1991:walley}
A lower prevision $\underline{P}$ is said to be \emph{coherent} if 
for all $n \in \mathbb{N}$, all $\lambda_{0},\dots,\lambda_{n} \geq 0$ and all $f_0, \dots, f_n \in\dom \underline{P}$, 
\begin{equation}\label{eq8.2}
\sup_{\omega \in \Omega}\left(\sum_{i=1}^{n}\lambda_i [f_i(\omega)- \underline{P}(f_i)] -\lambda_0[f_0(\omega) - \underline{P}(f_0)] \right) \geq 0.
\end{equation}
\end{definition}

Next, we give some examples of coherent lower previsions. The lower prevision given by $\underline{P}(f) \coloneqq \inf f$ for all $f \in \mathcal{L}(\Omega)$ is coherent, and is called the \emph{vacuous} lower prevision \cite[\S 2.3.7]{1991:walley}.
Previsions that avoid sure loss are also coherent:

\begin{theorem}
 \citep[p.87]{1991:walley}
A prevision $P$ is coherent if and only if it avoids sure loss (as a lower prevision).
\end{theorem}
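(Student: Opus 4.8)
The plan is to prove the two implications separately, observing that the forward direction holds for an arbitrary lower prevision while the converse is where self-conjugacy of a prevision does the real work.

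First I would dispatch the easy direction, that coherence implies avoiding sure loss; this needs no special structure at all. Setting $\lambda_0 = 0$ in the coherence inequality \eqref{eq8.2} collapses it exactly into the avoiding-sure-loss inequality \eqref{eq:3.1}, so any coherent lower prevision, prevision or not, avoids sure loss.

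For the converse, suppose the prevision $P$ avoids sure loss, and fix $n \in \mathbb{N}$, coefficients $\lambda_0, \dots, \lambda_n \geq 0$, and gambles $f_0, \dots, f_n \in \dom P$. The key step is to absorb the subtracted $\lambda_0$-term in \eqref{eq8.2} into the main sum by exploiting self-conjugacy. Since $P$ is a prevision, $\dom P = -\dom P$ and $P(-f) = -P(f)$; hence $g_0 \coloneqq -f_0 \in \dom P$ and $-\lambda_0[f_0(\omega) - P(f_0)] = \lambda_0[g_0(\omega) - P(g_0)]$ for every $\omega$. Substituting this identity rewrites the left-hand side of \eqref{eq8.2} as a single nonnegative combination $\sum_{i=1}^n \lambda_i[f_i(\omega) - P(f_i)] + \lambda_0[g_0(\omega) - P(g_0)]$ of the $n+1$ gambles $f_1, \dots, f_n, g_0 \in \dom P$. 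Applying the avoiding-sure-loss inequality \eqref{eq:3.1} to these gambles with the nonnegative weights $\lambda_1, \dots, \lambda_n, \lambda_0$, and using that $\Omega$ is finite so that the supremum is attained as a maximum, yields that this supremum is $\geq 0$, which is precisely \eqref{eq8.2}.

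I do not expect a genuine obstacle here: once the self-conjugacy identity $P(-f) = -P(f)$ is in hand, the argument is essentially a one-line reduction of coherence to avoiding sure loss. The only points requiring care are bookkeeping ones, namely checking that $-f_0$ indeed lies in the domain (guaranteed by $\dom P = -\dom P$) and that $\max$ and $\sup$ coincide on the finite set $\Omega$, rather than anything substantive.
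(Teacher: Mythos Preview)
Your argument is correct. The paper itself does not supply a proof of this theorem; it is simply quoted from \citet[p.~87]{1991:walley} as a known result, so there is no in-paper proof to compare against. Your proof is the standard one: the forward implication is immediate by specialising $\lambda_0=0$ in \eqref{eq8.2}, and for the converse you correctly exploit self-conjugacy to rewrite $-\lambda_0[f_0-P(f_0)]=\lambda_0[(-f_0)-P(-f_0)]$ with $-f_0\in\dom P$, reducing the coherence inequality to an instance of \eqref{eq:3.1} with $n+1$ gambles. The bookkeeping points you flag (domain closure under negation, $\sup=\max$ on finite $\Omega$) are handled exactly as you say, and nothing further is needed.
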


The expectation of $f$ associated with the probability mass function $p$ is given by
\begin{equation}\label{eq:8.3}
E_p(f) \coloneqq \sum_{\omega \in \Omega}p(\omega)f(\omega).
\end{equation}
An expectation operator is coherent as well.

Generating probability mass functions is easy (see \cref{alg:Exp} further), and we can use them
to generate other coherent lower previsions via lower envelopes and convex combinations:

\begin{definition}\cite[p.~60]{2014:troffaes:decooman::lower:previsions}\label{def:4}
Let $\Gamma$ be a non-empty collection of lower previsions defined on a common domain $\mathcal{K}$. A lower prevision $\underline{Q}$ is called the \emph{lower envelope} of $\Gamma$ if 
\begin{equation}
\underline{Q}(f) = \inf_{\underline{P} \in \Gamma}\underline{P}(f) \text{ for all } f \in \mathcal{K}.
\end{equation}
\end{definition}
\begin{theorem}\cite[p.~61]{2014:troffaes:decooman::lower:previsions}\label{thm:2}
If all lower previsions in  $\Gamma$  are coherent, then the lower envelope of $\Gamma$ is also coherent.
\end{theorem}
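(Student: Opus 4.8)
The plan is to verify the coherence inequality \eqref{eq8.2} for $\underline{Q}$ directly, by comparing it term by term against the corresponding inequality for a well-chosen member of $\Gamma$. So I would fix an arbitrary $n \in \mathbb{N}$, non-negative weights $\lambda_0, \dots, \lambda_n$, and gambles $f_0, \dots, f_n \in \mathcal{K}$, and aim to show that the supremum over $\omega \in \Omega$ of
\begin{equation*}
g_{\underline{Q}}(\omega) \coloneqq \sum_{i=1}^{n}\lambda_i [f_i(\omega)- \underline{Q}(f_i)] -\lambda_0[f_0(\omega) - \underline{Q}(f_0)]
\end{equation*}
is non-negative. The immediate difficulty is that $\underline{Q}$, being a pointwise infimum, interacts asymmetrically with the two groups of terms: for $i \geq 1$ we have $\underline{Q}(f_i) \leq \underline{P}(f_i)$ for every $\underline{P} \in \Gamma$, which (since $\lambda_i \geq 0$) makes the $-\lambda_i \underline{Q}(f_i)$ terms at least as large as their $\underline{P}$-counterparts; but the single $+\lambda_0 \underline{Q}(f_0)$ term points the wrong way, because replacing $\underline{Q}(f_0)$ by the larger $\underline{P}(f_0)$ only increases $g$.

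To handle this offending $\lambda_0$ term, I would exploit the definition of the infimum. Given any $\epsilon > 0$, choose $\underline{P} \in \Gamma$ with $\underline{P}(f_0) < \underline{Q}(f_0) + \epsilon$. Writing $g_{\underline{P}}$ for the analogous expression built from $\underline{P}$, a direct computation gives
\begin{equation*}
g_{\underline{Q}}(\omega) - g_{\underline{P}}(\omega) = \sum_{i=1}^{n}\lambda_i [\underline{P}(f_i) - \underline{Q}(f_i)] - \lambda_0[\underline{P}(f_0) - \underline{Q}(f_0)],
\end{equation*}
which is independent of $\omega$ because the $f_i(\omega)$ contributions cancel. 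Each summand in the first sum is non-negative by the infimum property together with $\lambda_i \geq 0$, while the last term exceeds $-\lambda_0 \epsilon$ by the choice of $\underline{P}$; hence $g_{\underline{Q}}(\omega) > g_{\underline{P}}(\omega) - \lambda_0 \epsilon$ for every $\omega$.

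Finally, I would take suprema. Since the previous bound holds pointwise, $\sup_{\omega} g_{\underline{Q}}(\omega) \geq \sup_{\omega} g_{\underline{P}}(\omega) - \lambda_0\epsilon$, and coherence of $\underline{P}$ (available because every member of $\Gamma$ is coherent) gives $\sup_{\omega} g_{\underline{P}}(\omega) \geq 0$. Therefore $\sup_{\omega} g_{\underline{Q}}(\omega) \geq -\lambda_0 \epsilon$, and letting $\epsilon \to 0$ yields $\sup_{\omega} g_{\underline{Q}}(\omega) \geq 0$, as required. The main obstacle is exactly the asymmetry noted above: everything hinges on isolating the single term in which the infimum works against us and absorbing its error into an $\epsilon$ that can be sent to zero. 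An entirely analogous, and in fact simpler, $\epsilon$-free comparison would establish the corresponding envelope result for \emph{avoiding sure loss}, since \eqref{eq:3.1} has no distinguished $\lambda_0$ term.
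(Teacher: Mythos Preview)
Your argument is correct and is essentially the standard $\epsilon$-approximation proof for this result. Note, however, that the paper itself does not supply a proof of \cref{thm:2}: it merely cites \cite[p.~61]{2014:troffaes:decooman::lower:previsions}, so there is nothing in the paper to compare your approach against beyond confirming that your direct verification of \cref{eq8.2} is sound.
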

We define the unit simplex as the set of all probability mass functions:
\begin{equation}
\Delta(\Omega) \coloneqq \left\{ p \in \mathbb{R}^\Omega\colon p \geq 0\text{\ and }\sum_{\omega \in \Omega} p(\omega) = 1 \right\}.
\end{equation}
Its extreme points are the $\{0,1\}$-valued probability mass functions \cite[\S 3.2.6]{1991:walley}. 
The \emph{credal set} of a lower prevision $\underline{P}$ is defined by
\begin{equation}
\mathcal{M}_{\underline{P}} := \{p \in \Delta(\Omega):\forall f \in \dom\underline{P},\ E_p(f) \geq\underline{P}(f) \}.
\end{equation}
$\mathcal{M}_{\underline{P}}$  completely determines $\underline{P}$ if $\underline{P}$ is coherent and there is a one-to-one correspondence between coherent lower previsions on $\mathcal{L}(\Omega)$ and closed convex subsets of $\Delta(\Omega)$ \cite[p.~79]{2014:troffaes:decooman::lower:previsions}. Moreover, it suffices to consider the set of extreme points $\ext\mathcal{M}_{\underline{P}}$ of $\mathcal{M}_{\underline{P}}$ \cite[p.~145]{1991:walley}:

\begin{theorem}(adapted from \cite[p.~146]{1991:walley})\label{thm:73}
Let $\underline{P}$ be a coherent lower prevision. Then for every $f\in\dom\underline{P}$, there is a $p\in\ext\mathcal{M}_{\underline{P}}$ such that $\underline{P}(f) = E_p(f)$.
\end{theorem}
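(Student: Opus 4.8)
The plan is to reduce the statement to a standard fact about linear functionals on compact convex sets. First I would observe that, since $\Omega$ is finite, $\Delta(\Omega)$ is a compact convex polytope in $\mathbb{R}^\Omega$, and $\mathcal{M}_{\underline{P}}$ is the intersection of $\Delta(\Omega)$ with the closed half-spaces $\{p\colon E_p(f)\geq\underline{P}(f)\}$ ranging over $f\in\dom\underline{P}$. Hence $\mathcal{M}_{\underline{P}}$ is a closed convex subset of a compact set, so it is itself compact and convex. Because $\underline{P}$ is coherent it avoids sure loss, which guarantees $\mathcal{M}_{\underline{P}}\neq\emptyset$; a nonempty compact convex set in a finite-dimensional space therefore has at least one extreme point, so $\ext\mathcal{M}_{\underline{P}}$ is nonempty.

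The second ingredient is the lower envelope representation of a coherent lower prevision. As noted above, coherence is precisely the condition under which $\mathcal{M}_{\underline{P}}$ completely determines $\underline{P}$, in the sense that $\underline{P}(f)=\inf_{p\in\mathcal{M}_{\underline{P}}}E_p(f)$ for every $f\in\dom\underline{P}$. Since the map $p\mapsto E_p(f)=\sum_{\omega\in\Omega}p(\omega)f(\omega)$ is linear, hence continuous, and $\mathcal{M}_{\underline{P}}$ is compact, this infimum is attained: there is some $p^\star\in\mathcal{M}_{\underline{P}}$ with $\underline{P}(f)=E_{p^\star}(f)$. This already produces a minimizer; the remaining work is to move it to an extreme point.

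For the final step I would invoke Minkowski's theorem, which in finite dimensions expresses any point of a compact convex set as a (finite) convex combination of its extreme points. Writing the minimizer as $p^\star=\sum_j\alpha_j q_j$ with $q_j\in\ext\mathcal{M}_{\underline{P}}$, $\alpha_j>0$, and $\sum_j\alpha_j=1$, linearity of the expectation gives $\underline{P}(f)=\sum_j\alpha_j E_{q_j}(f)$. Each term satisfies $E_{q_j}(f)\geq\underline{P}(f)$ because $\underline{P}(f)$ is the minimum value over $\mathcal{M}_{\underline{P}}$, so this convex combination can equal $\underline{P}(f)$ only if $E_{q_j}(f)=\underline{P}(f)$ for every $j$. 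Taking $p\coloneqq q_j$ for any one such index then yields the desired extreme point with $\underline{P}(f)=E_p(f)$.

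I expect the main obstacle to be justifying the lower envelope identity $\underline{P}(f)=\min_{p\in\mathcal{M}_{\underline{P}}}E_p(f)$ from coherence, since this is the bridge between the algebraic definition of coherence and the geometric description of the credal set; once that identity is in hand, the passage from an arbitrary minimizer to an extreme one is the routine convexity argument sketched above.
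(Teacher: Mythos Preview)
The paper does not supply its own proof of this theorem; it simply attributes the result to Walley's book and moves on. Your argument is correct and is essentially the standard one: $\mathcal{M}_{\underline{P}}$ is nonempty (coherence implies avoiding sure loss), compact and convex (finite $\Omega$, intersection of closed half-spaces with the simplex); the lower envelope theorem for coherent lower previsions gives $\underline{P}(f)=\min_{p\in\mathcal{M}_{\underline{P}}}E_p(f)$; and Minkowski then lets you replace an arbitrary minimizer by an extreme one via the averaging argument you wrote out. You are also right to flag the lower envelope identity as the only non-elementary ingredient---the paper alludes to it just before the theorem (``$\mathcal{M}_{\underline{P}}$ completely determines $\underline{P}$ if $\underline{P}$ is coherent''), and it is precisely Walley's lower envelope theorem.
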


When $\ext\mathcal{M}_{\underline{P}}$ is finite, then $\underline{P}$ is called \emph{polyhedral}. 
We can construct a polyhedral lower prevision as follows.
Let $M$ be a finite set of probability mass functions on $\Omega$. A polyhedral lower prevision is then given by  \cite[\S 9.2.1]{2014:itip}
\begin{equation}\label{eq:8.8}
\underline{E}_{M}(f) \coloneqq\min_{p \in M}E_p(f).
\end{equation}

\begin{theorem}\citep[p.~79]{1991:walley}\label{thm:2.5}
  Let $\underline{P}_1$ and $\underline{P}_2$ be lower previsions on the same domain. Let $0\le \delta \le 1$.
  If $\underline{P}_1$ and $\underline{P}_2$ are coherent, then so is $(1-\delta)\underline{P}_1+\delta\underline{P}_2$.
\end{theorem}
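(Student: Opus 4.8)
The plan is to verify the coherence inequality \eqref{eq8.2} directly for $\underline{Q}\coloneqq(1-\delta)\underline{P}_1+\delta\underline{P}_2$. I would fix an arbitrary $n\in\mathbb{N}$, coefficients $\lambda_0,\dots,\lambda_n\ge 0$, and gambles $f_0,\dots,f_n$ in the common domain $\mathcal{K}$, and then check that the left-hand side of \eqref{eq8.2} is nonnegative for $\underline{Q}$. The key observation is that in \eqref{eq8.2} the part depending on $\omega$ involves only the gambles $f_i$, not the lower prevision, whereas the choice of lower prevision enters only through a single additive constant. Accordingly I would set
\[
h(\omega)\coloneqq\sum_{i=1}^{n}\lambda_i f_i(\omega)-\lambda_0 f_0(\omega),
\qquad
c_k\coloneqq\sum_{i=1}^{n}\lambda_i\underline{P}_k(f_i)-\lambda_0\underline{P}_k(f_0)
\quad(k=1,2),
\]
so that the quantity inside the supremum in \eqref{eq8.2} for $\underline{P}_k$ is exactly $h(\omega)-c_k$.

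First I would record what coherence of $\underline{P}_1$ and $\underline{P}_2$ gives us. Applying \eqref{eq8.2} to each of them yields $\sup_{\omega\in\Omega}\bigl(h(\omega)-c_k\bigr)\ge 0$, that is, $\sup_{\omega\in\Omega}h(\omega)\ge c_k$ for $k=1,2$, since the constant $c_k$ pulls straight out of the supremum.

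Next I would compute the constant associated with $\underline{Q}$. By the definition of $\underline{Q}$ and linearity, the constant subtracted from $h(\omega)$ when \eqref{eq8.2} is written for $\underline{Q}$ equals $(1-\delta)c_1+\delta c_2$. Hence the expression under the supremum for $\underline{Q}$ is $h(\omega)-\bigl[(1-\delta)c_1+\delta c_2\bigr]$, whose supremum over $\omega$ equals $\sup_{\omega\in\Omega}h(\omega)-(1-\delta)c_1-\delta c_2$. Since $\sup_{\omega}h(\omega)$ dominates both $c_1$ and $c_2$, it dominates their convex combination, so this supremum is nonnegative, which is exactly \eqref{eq8.2} for $\underline{Q}$. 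As $n$, the $\lambda_i$, and the $f_i$ were arbitrary, $\underline{Q}$ is coherent.

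The only point that needs care is the temptation to argue via ``the supremum of a convex combination is at least the convex combination of the suprema,'' which is false in general, as the supremum is subadditive and gives the inequality in the wrong direction. The argument goes through only because $\underline{P}_1$ and $\underline{P}_2$ share the same domain and hence the same $\omega$-dependent function $h$; the two expressions under the supremum differ solely by the constants $c_1$ and $c_2$, and for such vertical translates of a single function the supremum behaves linearly in the constant. I expect this structural observation, that the convex combination acts only on the additive constants, to be the crux of the proof.
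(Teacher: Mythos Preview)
Your argument is correct. Note, however, that the paper does not actually supply its own proof of \cref{thm:2.5}: the result is simply quoted from \citet[p.~79]{1991:walley} and used as a black box. So there is no ``paper's proof'' to compare against; your direct verification of \eqref{eq8.2} for $\underline{Q}=(1-\delta)\underline{P}_1+\delta\underline{P}_2$ stands on its own, and the key structural observation you isolate---that the convex combination affects only the additive constants $c_1,c_2$ while the $\omega$-dependent part $h$ is shared---is exactly what makes the one-line estimate $\sup_\omega h\ge\max(c_1,c_2)\ge(1-\delta)c_1+\delta c_2$ go through.
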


Let $P_0$ be a coherent prevision on $\mathcal{L}(\Omega)$ and let $0 \le \delta \le 1$.
The lower prevision defined
on all $f\in\mathcal{L}(\Omega)$
by
\begin{equation}\label{eq:8.2}
\underline{P}(f) \coloneqq (1-\delta)P_{0}(f) + \delta \inf f
\end{equation}
is called a \emph{linear-vacuous mixture} \citep[\S 2.9.2]{1991:walley},
and is coherent by \cref{thm:2.5}.


\subsection{Natural extension}


The natural extension of a set of desirable gambles $\mathcal{D}$ is defined as the smallest set of gambles which includes all finite non-negative combinations of gambles in $\mathcal{D}$ and all non-negative gambles \cite[\S~3.7]{2014:troffaes:decooman::lower:previsions}:

\begin{definition}\cite[p.~32]{2014:troffaes:decooman::lower:previsions}
The \emph{natural extension} of a set $\mathcal{D}\subseteq\mathcal{L}(\Omega)$ is:
\begin{equation}\label{eq:5.1}
\mathcal{E}_{\mathcal{D}}\coloneqq\left\lbrace g_{0} + \sum_{i=1}^{n} \lambda_{i}g_{i} \colon g_{0} \geq 0,\,n \in \mathbb{N},\, g_1,\dots,g_n \in \mathcal{D},\,\lambda_1,\dots,\lambda_n \geq 0 \right\rbrace.
\end{equation}
\end{definition}

From this natural extension, we can derive a supremum buying price for any gamble $f$:
\begin{definition}
For any set $\mathcal{D}\subseteq\mathcal{L}(\Omega)$ and $f\in\mathcal{L}(\Omega)$, we define:
\begin{align}\label{eq:6.11}
\begin{split}
\underline{E}_{\mathcal{D}}(f) & \coloneqq \sup \left\lbrace\alpha \in \mathbb{R}\colon f-\alpha \in \mathcal{E}_{\mathcal{D}}\right\rbrace\\
& =  \sup \left\lbrace\alpha \in \mathbb{R}\colon f -\alpha \geq \sum_{i=1}^{n} \lambda_{i}f_{i}, n \in \mathbb{N}, f_{i} \in \mathcal{D}, \lambda_{i} \geq 0\right\rbrace.
\end{split}
\end{align}
\end{definition}
Note that we can derive a set of desirable gambles from $\underline{P}$ as follows \citep[p.~42]{2014:troffaes:decooman::lower:previsions}:
\begin{equation}\label{eq:5.10}
\mathcal{D}_{\underline{P}} \coloneqq \left\{ g-\mu\colon g \in \dom\underline{P} \text{ and } \mu < \underline{P}(g) \right\}.
\end{equation} 
Putting this all together, we can define the natural extension of $\underline{P}$:
\begin{definition} \citep[\S 3.1.1]{1991:walley}
Let $\underline{P}$ be a lower prevision. The natural extension of $\underline{P}$ is a lower prevision defined on all $f \in \mathcal{L}(\Omega)$ and given by:
\begin{multline}
\underline{E}_{\underline{P}}(f) \coloneqq \underline{E}_{\mathcal{D}_{\underline{P}}}(f)
\\
=  \sup \left\lbrace\alpha \in \mathbb{R}\colon f -\alpha \geq \sum_{i=1}^{n} \lambda_{i}(f_{i}-\underline{P}(f_{i})) , n\in\mathbb{N},\,f_{i} \in  \dom\underline{P},\,\lambda_{i} \geq 0\right\rbrace. 
\end{multline}
\end{definition} 

\section{Linear programming}\label{sec3}
In this section, we briefly review linear programming, and we study several linear programming problems for checking avoiding sure loss.

\subsection{Linear programming problems}
Any problem of minimising or maximising a linear function, called the objective function, subject to linear constraints, is called a linear program, and can always be written in the form of \cref{eq:p}, with dual given in \cref{eq:d}:
\begin{align}
  \tag{P}\label{eq:p}
  \min c^\intercal x
  &\text{ subject to } Ax = b,\ x \geq 0 \\
  \tag{D}\label{eq:d}
  \max b^\intercal y
  &\text{ subject to } A^\intercal y + t = c,\ t \geq 0, \ y \text{ free}
\end{align}
where $A \in \mathbb{R}^{m \times n}$ has rank $m$, $m \leq n$,
and $x$, $c$, $t$, $b$, and $y$ are vectors with dimensions as expected
(so $x$, $c$, $t\in\mathbb{R}^n$ and $b$, $y\in \mathbb{R}^m$).
We call \cref{eq:p} the \emph{primal problem} and \cref{eq:d} the \emph{dual problem}. They have the same solution \citep[p.~59]{1993:Fang:Puthenpura}, so we can solve either one of them. 

A solution is called \emph{feasible} if it satisfies all constraints. The primal problem is called \emph{unbounded} if for all $\lambda \in \mathbb{R}$, there is a feasible solution $x$ such that $c^\intercal x\leq \lambda$. A feasible solution that achieves the optimal value of the objective function is called an \emph{optimal} solution. A \emph{basis} is a collection of $m$ of the $n$ variables that correspond to $m$ linearly independent columns of $A$.
A variable in the basis is called a \emph{basic} variable; otherwise it is called a \emph{non-basic} variable. A \emph{basic} feasible solution is a feasible solution such that all of the $n-m$ non-basic variables are zero. The basic feasible solutions are precisely the extreme points of the feasible region.

A linear programming problem is called \emph{degenerate} if it has basic feasible solutions with $n-m+1$ or more zero elements \citep[p.~22]{1993:Fang:Puthenpura}. If all basic feasible solutions are zero (this happens when $b=0$), we say that the problem is \emph{fully} degenerate. In this case, the feasible region is a cone, and therefore has only one extreme point, namely, the origin. As we will see later, one way to check avoiding sure loss is to solve a fully degenerate problem.
The following lemma is useful for finding the optimal value of a fully degenerate problem.

\begin{lemma}\label{lem:1} (A generalised version of \citep[p.~42, exercise~3.4]{2001:Vanderbei}.)
The linear programming problem
\begin{align}\label{eqlem1:1}
   \min \quad c^\intercal x \\ \label{eqlem1:2}
    \text{subject to }
      \quad Ax & \geq 0 
\end{align}
either has optimal value equal to zero, or is unbounded below.
\end{lemma}
\Cref{lem:1} shows that, for fully degenerate problems, if there is a feasible solution $x$ such that $c^\intercal x <0 $, then the problem is unbounded. Therefore, in our algorithms, we can stop early as soon as we find a negative value.

\subsection{Linear programming for checking avoiding sure loss}

We now present some linear programming problems for checking avoiding sure loss. The problems \linprogref{P1} and \linprogref{D1} in \cref{thm:1} are similar to the linear programming problems discussed for lower previsions in \citet[p.~175]{1991:walley}.

\begin{theorem}\label{thm:1}
The set $\mathcal{D}=\{f_1,\dots,f_n\}$ avoids sure loss if and only if the optimal value of \linprogref{P1} is zero:
\begin{align}
   \label{thm1:1}\tag{P1a}
   \linprogref{P1} &&
   \min \quad & \alpha \\
   \label{thm1:2}\tag{P1b}
   && \text{subject to}\quad
   &\forall \omega \in \Omega\colon
    \sum_{i=1}^{n} f_{i}(\omega)\lambda_{i} -\alpha \leq 0 \\
   \label{thm1:3}\tag{P1c}
   && &\forall i\colon \lambda_{i} \geq 0 \quad (\alpha \text{ free}), \\
   \intertext{or, equivalently, if and only if its dual problem, \linprogref{D1}, has a feasible solution:}
   \label{thm1:4}\tag{D1a}
   \linprogref{D1}  &&
   \max\quad & 0 \\
   \label{thm1:5}\tag{D1b}
   && \text{subject to}\quad & \forall f_i\in\mathcal{D}\colon \sum_{\omega \in \Omega} f_i(\omega)p(\omega) \geq 0\\
   \label{thm1:6}\tag{D1c}
   && & \sum_{\omega \in \Omega}p(\omega) = 1\\
   \label{thm1:7}\tag{D1d}
   && & \forall \omega\colon p(\omega) \geq 0.
\end{align}

\end{theorem}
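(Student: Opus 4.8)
The plan is to study \linprogref{P1} directly, showing that its optimal value is zero exactly when $\mathcal{D}$ avoids sure loss, and then to obtain the equivalence with feasibility of \linprogref{D1} from linear programming duality together with \cref{lem:1}.

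First I would observe that the feasible region of \linprogref{P1} is a cone: if $(\lambda,\alpha)$ satisfies the constraints \linprogref{P1b} and \linprogref{P1c}, then so does $(c\lambda,c\alpha)$ for every $c \geq 0$. Hence \linprogref{P1} is fully degenerate, so \cref{lem:1} applies and its optimal value is either zero or unbounded below. Moreover $(\lambda,\alpha)=(0,0)$ is feasible with objective value zero, so the optimal value is at most zero. For any fixed feasible $\lambda \geq 0$, the constraint \linprogref{P1b} forces $\alpha \geq \max_{\omega \in \Omega}\sum_{i=1}^n f_i(\omega)\lambda_i$, and this lower bound is attained; thus \linprogref{P1} computes $\min_{\lambda \geq 0}\max_{\omega\in\Omega}\sum_{i=1}^n f_i(\omega)\lambda_i$.

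Next I would connect this quantity to \cref{def:1}. Since $\mathcal{D}=\{f_1,\dots,f_n\}$ is finite, any finite non-negative combination of gambles chosen from $\mathcal{D}$ (with repetition allowed) can be rewritten, by collecting coefficients, as $\sum_{i=1}^n \lambda_i f_i$ with $\lambda_i \geq 0$. Therefore the condition in \cref{def:1} is equivalent to requiring $\max_{\omega}\sum_i f_i(\omega)\lambda_i \geq 0$ for every $\lambda \in \mathbb{R}^n$ with $\lambda \geq 0$, that is, to $\min_{\lambda \geq 0}\max_\omega \sum_i f_i(\omega)\lambda_i \geq 0$. Combined with the upper bound of zero from the previous step, this shows that the optimal value of \linprogref{P1} is zero precisely when $\mathcal{D}$ avoids sure loss, and is otherwise unbounded below. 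Finally, I would verify that \linprogref{D1} is the linear programming dual of \linprogref{P1}: assigning a multiplier $p(\omega) \geq 0$ to each constraint \linprogref{P1b}, the free variable $\alpha$ produces the equality \linprogref{D1c}, the sign constraints \linprogref{P1c} produce the inequalities \linprogref{D1b}, and the objective collapses to the constant in \linprogref{D1a}. With the pair identified as in \cref{eq:p,eq:d}, strong duality then gives the conclusion: if \linprogref{P1} is bounded its value equals that of \linprogref{D1} and \linprogref{D1} is feasible, whereas if \linprogref{P1} is unbounded then \linprogref{D1} is infeasible. Together with the previous step, the optimal value of \linprogref{P1} is zero if and only if \linprogref{D1} has a feasible solution.

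The main obstacle I anticipate is twofold: justifying the reduction of the quantifier over arbitrary-length combinations in \cref{def:1} to a single vector $\lambda \in \mathbb{R}^n$ with $\lambda \geq 0$, and deriving \linprogref{D1} in a form that matches the standard primal--dual pair \cref{eq:p,eq:d} so that the feasibility--unboundedness dichotomy of strong duality can be invoked cleanly.
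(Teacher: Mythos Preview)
Your proposal is correct. Note, however, that the paper does not actually supply a proof of \cref{thm:1}; it merely states the result with a reference to \citet[p.~175]{1991:walley}. The closest analogue in the paper is the appendix proof of \cref{thm:3}, and your argument mirrors that one almost exactly: invoke \cref{lem:1} to reduce to the zero-versus-unbounded dichotomy, tie the sign of the objective to \cref{def:1}, and obtain the dual equivalence from strong duality. The only cosmetic difference is that you phrase the forward direction by first rewriting \linprogref{P1} as $\min_{\lambda\ge 0}\max_\omega\sum_i f_i(\omega)\lambda_i$ and then comparing with \cref{def:1}, whereas the paper (for \cref{thm:3}) argues the two implications separately by exhibiting feasible points; both routes are equally short here.
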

Note that \linprogref{P1} is fully degenerate, and \linprogref{D1} is nearly fully degenerate.
Clearly, any feasible solution of \linprogref{D1} is also an optimal solution, since the objective function is constant.

\subsection{Reduced linear programming problem for checking avoiding sure loss}

When solving linear programs, several algorithms, such as the simplex and  the affine scaling methods, require all variables to be non-negative. Here we present alternative linear programming problems which are slightly smaller in dimension and have only non-negative variables. The following theorem is presented in \citep{2017:Nakharutai:Troffaes:Caiado} without a proof; we give a proof in the appendix.

\begin{theorem}\citep{2017:Nakharutai:Troffaes:Caiado}\label{thm:3} 
Choose any $\omega_0 \in \Omega$. The set $\mathcal{D}=\{f_1,\dots,f_n\}$ avoids sure loss if and only if the optimal value of \linprogref{P2} is zero:
\begin{align}
  \linprogref{P2}
  \tag{P2a}
  && \min \quad &  \sum_{i=1}^{n} \lambda_{i} f_{i}(\omega_0)+\alpha  \\
  \tag{P2b}
  \label{eq:19}
  && \text{subject to} \quad
  & \forall \omega \neq \omega_0\colon
  \sum_{i=1}^{n} (f_{i}(\omega_0)-f_{i}(\omega))\lambda_{i} +\alpha \geq 0 \\ 
  \tag{P2c}
  && & \forall i\colon\lambda_{i} \geq 0 \text{ and } \alpha \geq 0, 
  \intertext{or, equivalently, if and only if its dual problem, \linprogref{D2}, has a feasible solution:}
  \linprogref{D2}
  \tag{D2a}
  && \max \quad &  0 \\
  \tag{D2b}
  \label{eq:22}
  && \text{subject to}  \quad
  & \forall f_{i} \in \mathcal{D}\colon
  \sum_{\omega \neq \omega_0}(f_{i}(\omega_0)-f_{i}(\omega))p(\omega) \leq f_{i}(\omega_0)\\
  \tag{D2c}
  \label{eq:23}
  && & \sum_{\omega \neq \omega_0}p(\omega) \leq 1 \indent
  \\
  \tag{D2d}
  && & \forall \omega\colon p(\omega) \geq 0. 
\end{align}
\end{theorem}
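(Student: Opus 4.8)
The plan is to argue not directly from \cref{def:1}, but instead to reduce both equivalences to \cref{thm:1}, which already characterises avoiding sure loss via \linprogref{P1} and \linprogref{D1}. The key observation is that \linprogref{P2} is nothing more than \linprogref{P1} after a change of variables, and that \linprogref{D2} is \linprogref{D1} after eliminating the probability mass at $\omega_0$.

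For the primal equivalence, I would introduce the slack of the $\omega_0$-constraint of \linprogref{P1}. Writing $\alpha'$ for the free decision variable of \linprogref{P1} and setting $\alpha \coloneqq \alpha' - \sum_{i=1}^{n} f_i(\omega_0)\lambda_i$, the constraint $\sum_{i=1}^{n} f_i(\omega_0)\lambda_i \le \alpha'$ from \linprogref{P1} becomes exactly $\alpha \ge 0$, while each remaining constraint $\sum_{i=1}^{n} f_i(\omega)\lambda_i \le \alpha'$ with $\omega \ne \omega_0$ rearranges into \cref{eq:19}. The objective transforms as $\alpha' = \sum_{i=1}^{n} f_i(\omega_0)\lambda_i + \alpha$, which is precisely the objective of \linprogref{P2}. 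I would then verify that $(\lambda,\alpha')\mapsto(\lambda,\alpha)$ and its inverse carry feasible solutions to feasible solutions and preserve the objective value, so the two programs share the same optimal value. Combined with \cref{thm:1}, this yields that \linprogref{P2} has optimal value zero if and only if $\mathcal{D}$ avoids sure loss.

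For the dual equivalence, I would relate a feasible $p$ of \linprogref{D1}, a full probability mass function on $\Omega$, to its restriction $(p(\omega))_{\omega\ne\omega_0}$. Using $p(\omega_0)=1-\sum_{\omega\ne\omega_0}p(\omega)$, the normalisation \cref{thm1:6} turns into the inequality \cref{eq:23}, and substituting into \cref{thm1:5} and collecting terms turns each constraint into \cref{eq:22}. Conversely, any $(p(\omega))_{\omega\ne\omega_0}$ feasible for \linprogref{D2} extends to a feasible $p$ for \linprogref{D1} by defining $p(\omega_0)\coloneqq 1-\sum_{\omega\ne\omega_0}p(\omega)\ge 0$. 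This gives a bijection between the two feasible regions, so \linprogref{D2} is feasible if and only if \linprogref{D1} is, and \cref{thm:1} closes the argument. As a consistency check one can also confirm that \linprogref{D2} is the linear programming dual of \linprogref{P2}, so that strong duality independently links the two halves.

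The main obstacle, though essentially bookkeeping, is to get the change of variables and the sign conventions exactly right: in particular, to confirm that the single constraint $\alpha \ge 0$ in \linprogref{P2} captures precisely the $\omega_0$-constraint of \linprogref{P1}, so that no feasibility information is silently lost or added, and likewise that eliminating $p(\omega_0)$ correctly converts the equality \cref{thm1:6} into the inequality \cref{eq:23} together with the nonnegativity $p(\omega_0)\ge 0$.
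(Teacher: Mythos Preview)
Your argument is correct, but it follows a genuinely different route from the paper's own proof. The paper argues directly from \cref{def:1}: it first invokes \cref{lem:1} to reduce the possibilities for the optimal value of \linprogref{P2} to ``zero or unbounded below'', then rewrites \cref{eq:19} as $\sup_{\omega}\sum_i\lambda_i f_i(\omega)\le\sum_i\lambda_i f_i(\omega_0)+\alpha$, and from there shows that avoiding sure loss forces the objective to be nonnegative (hence zero, by the trivial feasible point), while not avoiding sure loss allows the construction of a feasible point with strictly negative objective. The dual part is dispatched in one line by the strong duality theorem.

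Your reduction to \cref{thm:1} via the affine substitution $\alpha=\alpha'-\sum_i f_i(\omega_0)\lambda_i$ is more structural: it explains \emph{why} \linprogref{P2} has exactly this shape, namely that it is \linprogref{P1} with the $\omega_0$-constraint absorbed as the nonnegativity of $\alpha$. It avoids \cref{lem:1} altogether, at the cost of relying on \cref{thm:1}. Likewise, your treatment of \linprogref{D2} by eliminating $p(\omega_0)$ gives an explicit bijection with the feasible region of \linprogref{D1}, which is more informative than the paper's bare appeal to strong duality and, incidentally, bypasses any worry about whether strong duality applies when one of the programs might be infeasible. Both approaches are short; yours makes the link between the two theorems transparent, while the paper's is self-contained and simultaneously illustrates the role of \cref{lem:1} that is exploited later as an early stopping rule.
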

\begin{proof}
See appendix.
\end{proof}

\linprogref{P2} is still fully degenerate, whilst \linprogref{D2} is no longer degenerate.

As primal optimality corresponds to dual feasibility \citep[p.~104]{2002:Goh:Yang}, if we choose any $\omega_0$ such that most values $f_i(\omega_0)$ are non-negative, then \linprogref{D2} will be closer to a dual feasible solution, and therefore \linprogref{P2} will also be closer to a primal optimal solution.
For instance, if there is an $\omega_0$ for which $f_{i}(\omega_0)\ge 0$ for all $i$, then we directly obtain a feasible solution of \linprogref{D2} by setting $p(\omega)=0$ for all $\omega\neq\omega_0$ \citep{2017:Nakharutai:Troffaes:Caiado}. The corresponding optimal solution of \linprogref{P2} is given by $\lambda_i =0$ for all $i$ and $\alpha =0$.

Next, we look at the simplex, the affine scaling and the primal-dual algorithms. We briefly explain how they work, and discuss improvements.

\section{Improving algorithms for solving linear programs}
\label{sec4}
\subsection{Simplex methods}
The simplex method is an iterative algorithm that needs an extreme point to start. If a problem can be written in the form:
\begin{equation}\tag{S}\label{eq:s}
\min c^\intercal x \text{ subject to } Ax + s = b,\ x\ge 0,\ s \ge 0,
\end{equation}
then provided that $b \geq 0$, we immediately obtain a starting extreme point by setting $s=b$ and $x=0$ \citep[\S 4.2]{2001:hillier}.
\Cref{eq:s} can be represented in a table.
The simplex algorithm performs row operations on this table to move between extreme points, improving the value of the objective function at each iteration, until we find the optimal value \citep[\S 4.4]{2001:hillier}.

The problem \linprogref{P2} can be easily written as \cref{eq:s} by negating
\cref{eq:19} and adding non-negative slack variables $s(\omega)$ \citep{2017:Nakharutai:Troffaes:Caiado}:
\begin{align}\label{eq:6.8}
  \linprogref{P3}
  \tag{P3a}
  && \min \quad
  &  \sum_{i=1}^{n} \lambda_i f_i(\omega_0)+\alpha  \\
  \tag{P3b}
  \label{eq:6.9}
  && \text{subject to} \quad & \forall \omega \neq \omega_0\colon
  \sum_{i=1}^{n} (f_i(\omega)-f_i(\omega_0))\lambda_i-\alpha+s(\omega)=0 \\ 
  \tag{P3c}
  \label{eq:6.10}
  && & \forall i\colon \lambda_i \geq 0,\
  \forall \omega\neq \omega_0\colon s(\omega) \geq 0\text{ and }\alpha\geq 0.
\end{align} 
Setting all $\lambda_i$, $ \alpha$ and $s(\omega)$ to zero
provides an initial extreme point.
Because \linprogref{P3} is fully degenerate,
this is also the only extreme point.

Full degeneracy also implies that, by the minimum ratio test \citep[p.~36]{1993:Fang:Puthenpura}, we can select any leaving basic variable at any point in the simplex method. Moreover, the value of any new entering basic variable is always zero, so the objective value never improves. This is bad news: if the same sequence of degenerate pivots associated with a non-optimal solution is repeated multiple times, then the simplex method is said to be \emph{cycling} and will never terminate \citep[\S 3.5]{1993:Fang:Puthenpura}. Fortunately, there are simple ways to avoid cycling, such as the lexicographic method or Bland's rule \citep[\S 3.6]{1993:Fang:Puthenpura}.

Once the problem of cycling is addressed, we may encounter another problem called \emph{stalling}, meaning that the method performs an exponentially long sequence of degenerate pivots \citep{1979:Cunningham}. Since there are no known efficient rules for choosing leaving variables for fully degenerate problems, we may end up visiting all extreme points. Efficiently solving fully degenerate problems is still an open problem \citep{Murty:2009}. 

We now consider the dual problem \linprogref{D2} and convert it into the form of \cref{eq:s}.
We first add non-negative slack variables to \cref{eq:22,eq:23} and obtain equality constraints. Since we want all the right hand side values to be non-negative, for every $j$ such that $f_j(\omega) < 0$, we multiply the corresponding constraint by $-1$ to make it non-negative and then add a non-negative artificial variable. In this case, the size of the linear programming problem is slightly bigger. We arrive at \cite{2017:Nakharutai:Troffaes:Caiado}:
{\small
\begin{align}
  \linprogref{D3}
  \tag{D3a}
  \label{eq:28}
  && \min \quad & \sum_{j \in N} v_j \\
  \tag{D3b}
  \label{eq:29}
  && \text{s.t.}\quad
  & \forall j \in N\colon
  \sum_{\omega \neq \omega_0}(f_j(\omega)-f_j(\omega_0))p(\omega)-s_j+v_j=-f_j(\omega_0)\\
  \tag{D3c}
  \label{eq:30} 
  && &\forall j \notin N\colon
  \sum_{\omega \neq \omega_0}(f_j(\omega_0)-f_j(\omega))p(\omega)+s_j=f_j(\omega_0)\\
  \tag{D3d}
  \label{eq:31}
  && & \sum_{\omega \neq \omega_0}p(\omega)+q=1 \\
  \tag{D3e}
  \label{eq:32}
  && & \forall\omega\neq \omega_0\colon p(\omega)\geq0,\,
  \forall j\colon s_j\geq0,\ v_j \geq0\text{ and } q \geq 0 
\end{align}
}with $N\coloneqq\{j\in N\colon f_{j}(\omega_0) < 0\}$.
An initial extreme point for \linprogref{D3} is given by $v_j = -f_j(\omega_0), s_j = 0 $ for all $j \in N $, $s_j = f_j(\omega_0)$ for all $j \notin N$, $p(\omega) = 0$  for all $\omega \neq \omega_0$ and $q = 1$.
If all $f_{j}(\omega_0) \geq 0$, then we have an immediate optimal solution.
The problem \linprogref{D3} is normally non-degenerate, except if $f_{j}(\omega_0) = 0$ for some $j$ \citep{2017:Nakharutai:Troffaes:Caiado}. 

To summarise, to check avoiding sure loss by the simplex method, we can solve either \linprogref{P3}, which is fully degenerate, or \linprogref{D3}, whose size is slightly larger.  Even though the simplex method may stall under degeneracy, in practice, it is still one of the most commonly used algorithms. This is why we treated it here. 

\subsection{Affine scaling methods}

The affine scaling method solves linear programs of the form of \cref{eq:p}. Given a starting interior feasible point, the method generates a sequence of interior feasible points which iteratively decrease the value of the objective function, until the improvement is small enough or unboundedness is detected \citep[\S 7.1.1]{1993:Fang:Puthenpura}.

The affine scaling method can solve \linprogref{P3} as it is already in the form of \cref{eq:p}. Similar to the simplex method, degeneracy can affect the performance of the affine scaling method. \Citet{1992:Tsuchiya:Muramatsu} show that this can be overcome by limiting the step-size of the algorithm. 

However, unlike the simplex method, \cref{lem:1} can be applied to the affine scaling method. Specifically, when the affine scaling solves \linprogref{P3}, the method can stop as soon as it finds a negative value for the objective function.

The dual problem \linprogref{D2} can be written in the form of \cref{eq:p} by adding non-negative slack variables:
\begin{align}
  \linprogref{D4} &&
  \tag{D4a}
  \label{eq:9}
  \max \quad &  0 \\
  \tag{D4b}
  \label{eq:10}
  && \text{subject to}\quad
  & \forall f_{i} \in \mathcal{D}\colon
  \sum_{\omega \neq \omega_0}(f_{i}(\omega_0)-f_{i}(\omega))p(\omega) +t_i = f_{i}(\omega_0)\\
  \tag{D4c}
  \label{eq:11}
  && & \sum_{\omega \neq \omega_0}p(\omega) +q = 1 \\ 
  \tag{D4d}
  \label{eq6.18}
  && & \forall\omega\neq \omega_0\colon
  p(\omega)\geq0,\ \forall i\colon t_i\geq0,\ q \geq 0.
\end{align}
Although we can also solve \linprogref{D3} by the affine scaling method, we solve \linprogref{D4} as it has fewer artificial variables.

As the affine scaling method requires an initial interior feasible point, we normally need to solve two linear programming problems: one to find a starting interior feasible point, and another one to solve the original problem with this starting point. 

A starting interior feasible solution can be found as follows (see \citep[\S 7.1.2]{1993:Fang:Puthenpura} for more details).
Consider the constraints $Ax=b$ and $x  \geq  0$.
Choose any point $x^0>0$ and calculate $z=b-Ax^0$.
If $z=0$, then  $x^0$ is an interior feasible solution of the original problem.
Otherwise, solve
\begin{equation}
  \label{eq:affinescaling:phase1}
  \tag{P'}
  \min\gamma\text{ subject to }Ax +z\gamma = b,\ x\ge 0,\ \gamma \ge 0
\end{equation}
by the affine scaling method, using $[x~~\gamma] =[x^0~~1]$ as a starting point (this point is an interior feasible solution of \cref{eq:affinescaling:phase1}). If an optimal solution is $[x^*~~\gamma^*]$ with $\gamma^*=0$, then $x^*$ is an interior feasible solution of the original problem. Otherwise, there is no feasible solution. 

The good news for us is that, for solving either \linprogref{P3} or \linprogref{D4}, we only need to solve a single linear programming problem.
For \linprogref{D4}, this is because every interior feasible point is also an optimal solution, so we only need to solve \cref{eq:affinescaling:phase1}.
For \linprogref{P3}, due to the structure of the problem,
we can immediately write down an interior feasible point in closed form,
so we do not need to solve \cref{eq:affinescaling:phase1}.

Let us explain how \cref{eq:affinescaling:phase1} looks like
for \linprogref{D4},
following (with a slight improvement here)
\citet{2017:Nakharutai:Troffaes:Caiado}.
Let $\Omega\setminus \{\omega_0\}= \{\omega_1, \dots,\omega_m\}$.
Consider, for the moment, an arbitrary
\begin{equation}\label{eq7.50}
  x^0 =
  \begin{bmatrix}
    p^0 (\omega_1) & \cdots & p^0 (\omega_m) &  t^0_1 &  \cdots & t^0_n &  q^0
  \end{bmatrix}
  > 0
\end{equation}
and define $[r~~z]\coloneqq b - Ax^0$, so
\begin{align}
  \label{eq:34}
  r_i&\coloneqq h_i - t_i^0\\
  \label{eq:35}
  z &\coloneqq
  1 - \left( \sum_{\omega \neq \omega_0}p^0(\omega) +q^0\right) \\
  \intertext{where}
  h_i&\coloneqq   f_{i}(\omega_0) - \sum_{\omega \neq \omega_0}(f_{i}(\omega_0)-f_{i}(\omega))p^0(\omega).
\end{align}
If we choose $q^0 = p^0(\omega) = 1/ |\Omega|$ for all $\omega \neq \omega_0$, then $z=0$.
Choose $t_i^0=1$ (or any other strictly positive value)
for all $i$ where $h_i\le 0$.
Finally, choose $t_i^0=h_i$ for all $i$ where $h_i>0$,
so all corresponding $r_i$ are zero.
So, \cref{eq:affinescaling:phase1} becomes:
\begin{align}\label{eq:36}   
  \linprogref{D4'}
  \tag{D4'a}
  && \min \quad & \gamma \\
  \tag{D4'b}
  && \text{s.t.}\quad
  & \forall i\colon
  \sum_{\omega \neq \omega_0}(f_{i}(\omega_0)-f_{i}(\omega))p(\omega) +t_i +r_i\gamma = f_{i}(\omega_0)\\
  \tag{D4'c}
  && & \sum_{\omega \neq \omega_0}p(\omega) +q = 1 \\
  \tag{D4'd}
  && & \forall\omega\neq \omega_0\colon p(\omega)\geq0,\,
  \forall i\colon t_i\geq0,\, q\geq0\text{ and } \gamma \geq 0,
\end{align}
with an initial interior feasible point as constructed. Note that for simplicity in our implementation, we choose $t_{i}^{0} = 1$ for all $i$.
If the optimal solution of \linprogref{D4'} has $\gamma^*=0$, then we will have found an interior feasible solution for \linprogref{D4} (and therefore also an optimal solution for \linprogref{D4}), and so $\mathcal{D}$ avoids sure loss; otherwise, there is no feasible solution and $\mathcal{D}$ incurs sure loss.

For \linprogref{P3}, we simply calculate a starting interior feasible point using \cref{thm:4} below, with $\lambda_i^0=1$.
\begin{theorem}\citep[modified]{2017:Nakharutai:Troffaes:Caiado}
\label{thm:4}
An interior feasible point of the following system of linear constraints 
\begin{align} \label{eq:56}
  \quad & \forall j\in\{1,\dots,m\}\colon
  \sum_{i=1}^{n} a_{ij}\lambda_i- \alpha  +s_j = b_j  \\
  \label{eq:57}
  & \forall i\colon\lambda_i\geq 0,\ \forall j\colon s_j\geq 0,\ \alpha\geq 0
\end{align}
is given by setting $\lambda_i=\lambda_i^{0}$ for some arbitrary $\lambda_i^{0}>0$,
$\alpha = 1+\max\{0, -\delta\}$ with 
\begin{equation}
\delta\coloneqq\min_{j}\left\{b_j-\sum_{i=1}^{n}a_{ij}\lambda_i^{0}\right\},
\end{equation}
and $s_{j} =b_j- \sum_{i=1}^{n}a_{ij}\lambda_i^{0}+\alpha $ .
\end{theorem}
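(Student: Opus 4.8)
The plan is to verify directly that the proposed point satisfies the equality constraints \cref{eq:56} and that all three non-negativity conditions in \cref{eq:57} hold \emph{strictly}, which is exactly what ``interior feasible'' demands. So there are four things to check: the equalities, and the strict inequalities $\lambda_i>0$, $\alpha>0$, and $s_j>0$. To streamline the bookkeeping I would write $d_j \coloneqq b_j - \sum_{i=1}^{n} a_{ij}\lambda_i^0$, so that $\delta = \min_j d_j$ and the prescription becomes simply $\lambda_i=\lambda_i^0$ and $s_j = d_j + \alpha$.

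First I would substitute these values into \cref{eq:56}. The left-hand side becomes $\sum_i a_{ij}\lambda_i^0 - \alpha + (d_j + \alpha) = \sum_i a_{ij}\lambda_i^0 + d_j = b_j$, so every equality holds automatically, because $s_j$ was defined precisely to absorb the residual. This step is routine. The strict positivity of $\lambda_i$ is immediate from the hypothesis $\lambda_i^0>0$, and $\alpha = 1 + \max\{0,-\delta\} \geq 1 > 0$ regardless of the sign of $\delta$, so these two positivity claims need no work.

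The only step requiring care --- and the main obstacle, though a mild one --- is showing $s_j>0$ for every $j$. Here I would use that $\delta = \min_j d_j$, so $d_j \geq \delta$ and hence $s_j = d_j + \alpha \geq \delta + \alpha = \delta + 1 + \max\{0,-\delta\}$. A short case split then finishes it: if $\delta \geq 0$ then $\max\{0,-\delta\}=0$ and $s_j \geq \delta + 1 \geq 1$; if $\delta < 0$ then $\max\{0,-\delta\} = -\delta$, the $\delta$ terms cancel, and $s_j \geq 1$. Either way $s_j \geq 1 > 0$, so the candidate point lies strictly inside the feasible region.

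In effect the whole statement reduces to the observation that choosing $\alpha = 1 + \max\{0,-\delta\}$ shifts the smallest of the quantities $d_j$ (which is $\delta$) up to at least $1$, while the equality constraints are preserved for free by the definition of $s_j$. I would therefore present the argument in the order above: equalities first, then the two easy positivity claims for $\lambda_i$ and $\alpha$, and finally the case analysis bounding $s_j$ away from zero.
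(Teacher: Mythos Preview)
Your proposal is correct and follows essentially the same approach as the paper's proof: verify the equalities hold by construction of $s_j$, note $\lambda_i>0$ and $\alpha\ge 1>0$ immediately, and bound $s_j\ge\delta+\alpha\ge 1>0$. The only cosmetic difference is that you introduce the shorthand $d_j$ and do an explicit case split on the sign of $\delta$, whereas the paper compresses this into the single inequality chain $s_j\ge\delta+\alpha\ge\delta+1-\delta>0$ (using $\max\{0,-\delta\}\ge-\delta$); the content is identical.
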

\begin{proof}
We must show that \cref{eq:56} is satisfied, and that all variables are strictly positive.

Clearly, \cref{eq:56} is satisfied by our choice of $s_j$,
all $\lambda_i=\lambda_i^{0}>0$, and $\alpha\ge 1>0$.
Finally, note that also all $s_j>0$ because
\begin{equation}\label{eqff}
s_{j}=b_j- \sum_{i=1}^{n}a_{ij}\lambda_i^{0}+\alpha \geq\delta + \alpha\geq \delta +1- \delta> 0,
\end{equation}
where we used the definitions of $\delta$ and $\alpha$ respectively.
\end{proof}

To conclude, to check avoiding sure loss with the affine scaling method, we either solve \linprogref{P3} or \linprogref{D4'}. In either case, we have a closed form initial interior feasible point. In addition, we can apply \cref{lem:1} as an extra stopping rule to detect unboundedness when solving \linprogref{P3}. However, we need to take care to limit the step-size due to degeneracy.

Next, we look at another interior-point method for which we can also apply \cref{lem:1,thm:4}, but which does not have a limitation on the step-size and which has faster convergence as observed in practice.

\subsection{Primal-dual methods}

The primal-dual method is an iterative algorithm which finds an optimal solution by solving \cref{eq:p,eq:d} simultaneously. At each iteration, the method solves the following system:
\begin{equation}\label{eq:11.1}
 \begin{bmatrix}
Ax - b\\ A^\intercal y +t - c \\ x^\intercal t    
\end{bmatrix} = 0,\, x\ge 0,\, t   \geq 0 \qquad (y \text{ free})
\end{equation}
whilst keeping the variables $x$ and $t$ positive. Theoretically, given an initial interior feasible point $(x,y,t)$ where $x>0$ and $t>0$, the method can generate a sequence of interior feasible points such that $x^\intercal t$ gets closer and closer to zero (by duality, the solution is optional when $x^\intercal t=0$; the method simply exploits this fact). However, in practical implementations, keeping $(x,y,t)$ in the feasible region is very difficult due to numerical issues \citep[\S 7.3]{1993:Fang:Puthenpura}. 

Therefore, in practical implementations, the primal-dual method starts with an arbitrary point $(x,y,t)$ where $x>0$ and $t>0$ and generates points that converge to a feasible optimal solution. The method will stop when the primal residual $Ax - b$, dual residual $A^\intercal y +t - c$, and  duality gap  $x^\intercal t$ are small enough, or when unboundedness in either the primal or the dual is detected. Although this modified version of the algorithm has no known convergence proof, it works extremely well in practice \citep[\S 7.3]{1993:Fang:Puthenpura}. 

The problem \linprogref{P3} is already in the form of \cref{eq:p}. Its dual is \citep{2017:Nakharutai:Troffaes:Caiado}:
\begin{align}
  \tag{D5a}
  \label{eq7.57}
  \linprogref{D5} &&
  \max \quad &  0 \\
  \tag{D5b}
  \label{eq7.58}
  && \text{s. t.} \quad &
  \forall f_{i} \in \mathcal{D}\colon
  \sum_{\omega \neq \omega_0 }(f_{i}(\omega )-f_{i}(\omega_0))v(\omega)+t_i =  f_{i}(\omega_0)\\
  \tag{D5c}
  \label{eq7.59}
  && & q -  \sum_{\omega \neq \omega_0}v(\omega) = 1 \\
  \tag{D5d}
  \label{eq7.60}
  && & \forall \omega \neq \omega_0\colon v(\omega) +p(\omega) = 0\\
  \tag{D5e}
  \label{eq7.61}
  && &  \forall i\colon t_i \geq 0,\ \forall \omega\neq \omega_0\colon p(\omega) \geq0 \text{ and } q \geq 0.
\end{align}
Because $-v(\omega)=p(\omega) \geq 0$, \linprogref{D5} is equivalent to \linprogref{D4}, as expected. The primal-dual method solves \linprogref{P3} and \linprogref{D5} simultaneously.

\Cref{thm:4} provides an initial interior feasible point for \linprogref{P3}. However, there is no closed form feasible point for \linprogref{D5} (if we had, then we immediately would have found an optimal solution). In this case, a starting point of \linprogref{D5} can be $q^0 =p^0(\omega)= 1/|\Omega|$, $v^0(\omega)= - 1/|\Omega|$, and $t^0_i= 1$ for all $i$ \citep{2017:Nakharutai:Troffaes:Caiado}.

Remind that we can apply \cref{lem:1} to \linprogref{P3} only if we can keep all iterative points in the feasible region. Although we start \linprogref{P3} with a feasible point, the next points do not necessarily remain in the feasible region due to numerical rounding errors. Therefore, it is good practice to calculate the primal residual and only apply \cref{lem:1} if this error is neglegible \citep{2017:Nakharutai:Troffaes:Caiado}.

Now, consider solving the problem \linprogref{D4'} by the primal-dual method. In this case, the dual of \linprogref{D4'} can be written in the form of \cref{eq:d} as follows:
\begin{align}
  \tag{P4'a}
  \label{eq:63}
  \linprogref{P4'}
  && \max \quad &  \sum_{i=1}^{n} \lambda_{i} f_{i}(\omega_0)+\alpha  \\ 
  \tag{P4'b}
  \label{eq:64}
  && \text{s. t.} \quad & \forall \omega \neq \omega_0\colon \sum_{i=1}^{n} (f_{i}(\omega_0)-f_{i}(\omega))\lambda_{i} +\alpha +s(\omega) =  0 \\ 
  \tag{P4'c}
  \label{eq:65}
  && & \forall i\colon \lambda_i + u_i = 0\\
  \tag{P4'd}
  \label{eq:66}    
  && & \alpha + \beta = 0\\
  \tag{P4'e}
  \label{eq:67}
  && & \sum_{i=1}^{n} r_i\lambda_i +\mu = 1\\
  \tag{P4'f}
  \label{eq:68}
  && & \forall i\colon u_i\ge 0,\ \forall \omega\neq \omega_0\colon s(\omega)\ge 0,\ \beta\ge 0 \text{ and } \mu \geq 0. 
\end{align} 
To find an initial interior feasible point of \linprogref{P4'}, first choose $\lambda_i<0$ such that $\sum_{i=1}^{n} r_i\lambda_i < 1$. The $u_i>0$ are then fixed by \cref{eq:65}, and $\mu$ is fixed by \cref{eq:67}. Note that $\mu = 1- \sum_{i=1}^{n} r_i\lambda_i > 0$ by construction. Substituting $\alpha = - \beta$ into \cref{eq:64}, we can then apply \cref{thm:4} to find interior feasible values for $\beta$ and $s(\omega)$ for all $\omega \neq \omega_0$. Unfortunately we cannot apply \cref{lem:1} to \linprogref{P4'}, because the problem is no longer fully degenerate.

In the next section, we explain how we can generate random sets of desirable gambles that either avoid or do not avoid sure loss. In \cref{sec8}, we will then benchmark our three methods on those randomly generated sets.


\section{Generating sets of desirable gambles via coherent lower previsions}\label{sec7}

In this section, we first give algorithms for generating coherent previsions (\cref{alg:Exp}), polyhedral lower previsions (\cref{alg:poly}) and linear-vacuous mixtures (\cref{alg:lin-vac}), as mentioned earlier. We then discuss how they can be used to generate sets of desirable gambles for benchmarking.

\begin{breakablealgorithm}
\caption{Generate a coherent prevision}
\label{alg:Exp}
\begin{algorithmic}
\REQUIRE Set of outcomes $\Omega$
\ENSURE Coherent prevision $P$ on $\mathcal{L}(\Omega)$
 \begin{enumerate}
\item[Stage 1.] Generate a probability mass function $p$ as follows:
\begin{enumerate}
\item For each $\omega$, sample $r_\omega$ uniformly from $(0,1)$.
\item For each $\omega$, set $p(\omega)\coloneqq \dfrac{\ln r_\omega}{\sum_{\omega \in \Omega} \ln r_\omega}$.
\end{enumerate}
\item[Stage 2.] Generate a coherent prevision $P$
\begin{enumerate}
\item For any $f \in \mathcal{L}(\Omega)$, $P(f) \coloneqq E_p(f)$ as in \cref{eq:8.3}.
\end{enumerate}
\end{enumerate}
\end{algorithmic}
\end{breakablealgorithm}

\begin{breakablealgorithm}
\caption{Generate a polyhedral lower prevision}
\label{alg:poly}
\begin{algorithmic}
\REQUIRE \parbox[t]{\textwidth}{Set of outcomes $\Omega$\\
$k$ coherent previsions: $Q_1,\dots,Q_k$ (e.g. obtained by \cref{alg:Exp})}
\ENSURE Polyhedral lower prevision $\underline{P}$ on $\mathcal{L}(\Omega)$
 \begin{enumerate}
\item[Stage 1.] For any $f \in \mathcal{L}(\Omega)$, 
$\underline{P}(f)\coloneqq \min_{j=1}^k\{Q_{j}(f)\}$.
\end{enumerate}
\end{algorithmic}
\end{breakablealgorithm}

\begin{breakablealgorithm}
\caption{Generate a linear-vacuous mixture}
\label{alg:lin-vac}
\begin{algorithmic}
\REQUIRE \parbox[t]{\textwidth}{Set of outcomes $\Omega$\\
$\delta \in (0,1)$ (e.g. sample  $\delta$ uniformly from $(0,1)$) \\
Coherent prevision $Q$ (e.g. generated by \cref{alg:Exp})}
\ENSURE Linear-vacuous mixture $\underline{P}$
 \begin{enumerate}
\item[Stage 1.] For any $f \in \mathcal{L}(\Omega)$,
$\underline{P}(f)\coloneqq (1-\delta)Q(f) + \delta \inf f$.
\end{enumerate}
\end{algorithmic}
\end{breakablealgorithm}

We now explain how to generate sets of desirable gambles that avoid sure loss from any given coherent lower previsions. As we will see, if this coherent lower prevision is more generic, then the generated set of desirable gambles will also be more generic. In addition to the analysis presented here, note that we also generated sets of desirable gambles from lower previsions that avoid sure loss but that are not coherent. However, we found no practical difference; see discussion at the end of \cref{sec8}.
 
We start by generating a coherent lower prevision $\underline{E}$ on $\mathcal{L}(\Omega)$ e.g.\ through one of the above algorithms. Next, we generate a finite subset $\mathcal{K}$ of $\mathcal{L}(\Omega)$ and set $\underline{P}(f)\coloneqq\underline{E}(f)$ for all $f \in\mathcal{K}$.
$\underline{P}$ is coherent too because it is the restriction of a coherent lower prevision \citep[p.~58]{2014:troffaes:decooman::lower:previsions}.
Therefore, the set $\mathcal{D}\coloneqq\{f-\underline{P}(f)\colon f \in \mathcal{K}\}$ avoids sure loss:
\begin{breakablealgorithm}
\caption{Generate a set of desirable gambles that avoids sure loss}
\label{alg:ASL}
\begin{algorithmic}
\REQUIRE\parbox[t]{\textwidth}{Set of outcomes $\Omega$\\
Number of desirable gambles $n\coloneqq|\mathcal{D}|$\\
Coherent lower prevision  $\underline{E}$ on $\mathcal{L}(\Omega)$}
\ENSURE Finite set of desirable gambles $\mathcal{D}$ that avoids sure loss
 \begin{enumerate}
\item[Stage 1.] Generate $\{f_j\colon j\in\{1,\dots, n\}\}$:\\
for each $\omega$ and $j$, sample $f_j(\omega)$ uniformly from $(0,1)$.
\item[Stage 2.] For each $i\in\{1,\dots,n\}$, calculate $\underline{E}(f_i)$.
\item[Stage 3.] Set $\mathcal{D}\coloneqq \{f_i - \underline{E}(f_i)\colon i\in\{1,\dots,n\}\}$.
\end{enumerate}
\end{algorithmic}
\end{breakablealgorithm}
Which type of coherent lower prevision should we use to generate sets of desirable gambles? To answer this question, first we look at $\mathcal{M}_{\underline{E}}$ and its extreme points, for various classes of $\underline{E}$:
\begin{enumerate}
\item[(i)] Vacuous lower prevision: $\mathcal{M}_{\underline{E}} = \Delta(\Omega)$ and its extreme points are all $0-1$ valued probabilities.
\item[(ii)] Coherent previsions: $\mathcal{M}_{\underline{E}} = \ext\mathcal{M}_{\underline{E}} =\{p\}$, $p \in \Delta(\Omega)$.
\item[(iii)] Polyhedral lower previsions: as in \cref{eq:8.8}, when $M$ is finite, $\mathcal{M}_{\underline{E}}$ is a polyhedron and has a finite set of extreme points.
\item[(iv)] Linear-vacuous mixtures: for $p_0 \in \Delta(\Omega)$ and $\delta > 0$, $\mathcal{M}_{\underline{E}} = \{(1-\delta)p_0 + \delta p,p\in\Delta(\Omega) \}$ and $\ext\mathcal{M}_{\underline{E}} = \{(1-\delta)p_0 + \delta p,\ p \text{ is a } 0-1\text{ valued probability} \}$.
\end{enumerate}

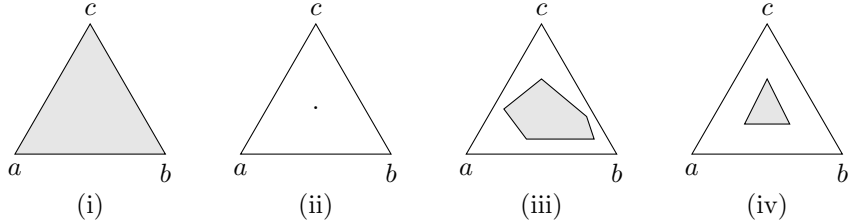
\begin{figure}[ht]
\centering
\begin{tikzpicture}
\filldraw[draw=black, fill=gray!20] (0,0) node[anchor=north]{$a$}
  -- (2,0) node[anchor=north]{$b$}
  -- (1,1.732) node[anchor=south]{$c$}
  -- cycle;
\draw (1,-1) node[anchor=south]{(i)};  

\draw (3,0) node[anchor=north]{$a$}
  -- (5,0) node[anchor=north]{$b$}
  -- (4,1.732) node[anchor=south]{$c$}
  -- cycle;
\draw (4,0.62) node{.};  
\draw (4,-1) node[anchor=south]{(ii)};  

\draw (6,0) node[anchor=north]{$a$}
  -- (8,0) node[anchor=north]{$b$}
  -- (7,1.732) node[anchor=south]{$c$}
  -- cycle;
\filldraw[draw=black, fill=gray!20] (6.5,0.6)-- (7,1)-- (7.6,0.5)--(7.7,0.2)-- (6.8,0.2)-- cycle;
\draw (7,-1) node[anchor=south]{(iii)};  

\draw (9,0) node[anchor=north]{$a$}
  -- (11,0) node[anchor=north]{$b$}
  -- (10,1.732) node[anchor=south]{$c$}
  -- cycle;
\filldraw[draw=black, fill=gray!20] (10,1)-- (9.7,0.4)--(10.3,0.4)-- cycle;
\draw (10,-1) node[anchor=south]{(iv)};  
\end{tikzpicture}
\caption{Simplex representation of different $\mathcal{M}_{\underline{E}}$ for $\Omega=\{a,b,c\}$: (i) vacuous, (ii) prevision, (iii) polyhedral lower prevision, (iv) linear-vacuous mixture.}\label{fig:1}
\end{figure}
\Cref{fig:1} shows examples of $\mathcal{M}_{\underline{E}}$ associated with different coherent lower previsions.
For polyhedral lower previsions, the number of extreme points is arbitrary (but finite). For linear-vacuous mixtures, the number of extreme points is limited to the number of outcomes, and the shape of its credal set is fixed (up to scale and translation).

When we generate $\mathcal{D}=\{f - \underline{P}(f)\colon f \in \dom\underline{P}\}$ as in \cref{alg:ASL}, the credal set associated with $\mathcal{D}$ is given by
\begin{equation}
\mathcal{M}_{\mathcal{D}} = \{p \in \Delta(\Omega)\colon \forall f \in \mathcal{D},\  E_{p}(f) \geq 0 \}.
\end{equation}
$\mathcal{M}_{\mathcal{D}}$ and $\mathcal{M}_{\underline{E}}$ are related as follows:
\begin{corollary}\label{co:1} Let
 $\underline{E}$ be a coherent lower prevision on $\mathcal{L}(\Omega)$, let $\underline{P}$ be a restriction of $\underline{E}$ to a finite domain, and let $\mathcal{D}\coloneqq\{f - \underline{P}(f)\colon f \in \dom \underline{P}\}$. Then:
\begin{enumerate}[(i)]
\item\label{co:1:subset} $\mathcal{M}_{\underline{E}} \subseteq \mathcal{M}_{\mathcal{D}}$.
\item\label{co:1:equal} If $\mathcal{M}_{\underline{E}}$ is a polyhedron, then there is a finite set $\mathcal{K}\subseteq\mathcal{L}(\Omega)$ such that if $ \dom \underline{P} =\mathcal{K}$, then $\mathcal{M}_{\mathcal{D}} = \mathcal{M}_{\underline{E}}$.
\end{enumerate}
\end{corollary}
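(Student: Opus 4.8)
The plan is to first observe that the credal set of $\mathcal{D}$ coincides with the credal set of the restriction $\underline{P}$, which collapses both parts into a comparison between $\mathcal{M}_{\underline{E}}$ and the credal set of a restriction of $\underline{E}$. Indeed, unpacking the definition, $p\in\mathcal{M}_{\mathcal{D}}$ means $E_p(f-\underline{P}(f))\ge 0$ for every $f\in\dom\underline{P}$; since $E_p$ is linear and $E_p(1)=1$, this is exactly $E_p(f)\ge\underline{P}(f)$ for every $f\in\dom\underline{P}$, i.e.\ $p\in\mathcal{M}_{\underline{P}}$. Hence $\mathcal{M}_{\mathcal{D}}=\mathcal{M}_{\underline{P}}$, and it suffices to prove $\mathcal{M}_{\underline{E}}\subseteq\mathcal{M}_{\underline{P}}$ for part~(i), and $\mathcal{M}_{\underline{P}}=\mathcal{M}_{\underline{E}}$ for a suitable finite $\mathcal{K}$ for part~(ii).

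For part~(i), I would take $p\in\mathcal{M}_{\underline{E}}$, so that $E_p(g)\ge\underline{E}(g)$ for all $g\in\mathcal{L}(\Omega)$. Because $\underline{P}$ is a restriction of $\underline{E}$, we have $\underline{P}(f)=\underline{E}(f)$ for all $f\in\dom\underline{P}$, whence $E_p(f)\ge\underline{P}(f)$ on $\dom\underline{P}$ and thus $p\in\mathcal{M}_{\underline{P}}=\mathcal{M}_{\mathcal{D}}$. This is just the observation that $\mathcal{M}_{\underline{P}}$ is cut out by a subset of the constraints that define $\mathcal{M}_{\underline{E}}$.

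For part~(ii), since $\mathcal{M}_{\underline{E}}$ is a polyhedron contained in $\Delta(\Omega)$, I would write it as $\Delta(\Omega)$ intersected with finitely many half-spaces, say $\mathcal{M}_{\underline{E}}=\Delta(\Omega)\cap\bigcap_{k=1}^{K}\{p\colon E_p(a_k)\ge b_k\}$ with $a_1,\dots,a_K\in\mathcal{L}(\Omega)$ and $b_k\in\mathbb{R}$ (grouping the defining relations of $\Delta(\Omega)$ separately from the remaining half-spaces). Set $\mathcal{K}\coloneqq\{a_1,\dots,a_K\}$ and $\dom\underline{P}=\mathcal{K}$. The key point is that coherence of $\underline{E}$ yields the lower-envelope identity $\underline{E}(a_k)=\min_{p\in\mathcal{M}_{\underline{E}}}E_p(a_k)$ (\cref{thm:73} together with the fact that every $q\in\mathcal{M}_{\underline{E}}$ satisfies $E_q(a_k)\ge\underline{E}(a_k)$ by definition of the credal set); since every $p\in\mathcal{M}_{\underline{E}}$ lies in the half-space $E_p(a_k)\ge b_k$, this minimum is at least $b_k$, so $\underline{E}(a_k)\ge b_k$. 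Now if $p\in\mathcal{M}_{\underline{P}}$, then $E_p(a_k)\ge\underline{P}(a_k)=\underline{E}(a_k)\ge b_k$ for each $k$, and $p\in\Delta(\Omega)$, so $p$ satisfies every defining constraint of $\mathcal{M}_{\underline{E}}$; hence $\mathcal{M}_{\underline{P}}\subseteq\mathcal{M}_{\underline{E}}$. Combined with part~(i) this gives $\mathcal{M}_{\mathcal{D}}=\mathcal{M}_{\underline{P}}=\mathcal{M}_{\underline{E}}$.

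The routine steps are the definitional collapse $\mathcal{M}_{\mathcal{D}}=\mathcal{M}_{\underline{P}}$ and the one-line argument for part~(i). The part that needs care — the main obstacle — is part~(ii): one must (a) invoke a finite half-space representation of the polyhedron $\mathcal{M}_{\underline{E}}$ and correctly handle the constraints inherited from $\Delta(\Omega)$, which $\mathcal{M}_{\mathcal{D}}$ enforces automatically since it is defined as a subset of $\Delta(\Omega)$, and (b) justify $\underline{E}(a_k)\ge b_k$ via the lower-envelope characterisation of the coherent $\underline{E}$. Notably, neither irredundancy of the representation nor the exact matching $\underline{E}(a_k)=b_k$ is needed: replacing $b_k$ by the possibly larger $\underline{E}(a_k)$ only tightens each constraint, and the reverse inclusion from part~(i) guarantees that the region cannot shrink past $\mathcal{M}_{\underline{E}}$.
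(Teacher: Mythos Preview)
Your proposal is correct and follows essentially the same approach as the paper's own proof: both arguments represent the polyhedral $\mathcal{M}_{\underline{E}}$ as $\Delta(\Omega)$ intersected with finitely many half-spaces $\{p:E_p(f_i)\ge\alpha_i\}$, use the lower-envelope identity $\underline{E}(f_i)=\min_{p\in\mathcal{M}_{\underline{E}}}E_p(f_i)\ge\alpha_i$, and conclude $\mathcal{M}_{\mathcal{D}}\subseteq\mathcal{M}_{\underline{E}}$ by tightening each bound from $\alpha_i$ to $\underline{E}(f_i)$. Your extra remarks (the explicit collapse $\mathcal{M}_{\mathcal{D}}=\mathcal{M}_{\underline{P}}$, the separate handling of the $\Delta(\Omega)$ constraints, and the note that irredundancy is unnecessary) are helpful clarifications but do not change the underlying strategy.
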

\begin{proof}
(i). We find that 
\begin{align}\label{eq5.16}
  \mathcal{M}_{\underline{E}}
  & = \bigcap_{f \in \mathcal{L}(\Omega)} \left\lbrace p \in \Delta(\Omega)\colon E_p(f) \geq \underline{E}(f) \right\rbrace \\
  & = \bigcap_{f \in \mathcal{L}(\Omega)} \left\lbrace p \in \Delta(\Omega)\colon E_p(f-\underline{E}(f)) \geq 0 \right\rbrace \\
  & \subseteq  \bigcap_{f \in \dom\underline{P}} \left\lbrace p \in \Delta(\Omega)\colon E_p(f-\underline{P}(f)) \geq 0 \right\rbrace = \mathcal{M}_{\mathcal{D}}.
\end{align}

(ii). By (i), we only need to show that $\mathcal{M}_{\mathcal{D}} \subseteq \mathcal{M}_{\underline{E}}$.
Since $\mathcal{M}_{\underline{E}} $ is a polyhedron, it is an intersection of a finite number of half-spaces. Therefore there exists an $n \in \mathbb{N}$, vectors $f_1,\dots,f_n$ and numbers $\alpha_1,\dots,\alpha_n$ such that 
\begin{equation}\label{eq:72}
\mathcal{M}_{\underline{E}} = \bigcap_{i=1}^{n}\{p \in \Delta(\Omega)\colon p \cdot f_i \geq \alpha_i\}
\end{equation}
where `$\cdot$' denotes the dot product.
Note that
\begin{equation}
  \underline{E}(f_i)
  =\min_{p \in\mathcal{M}_{\underline{E}} }p\cdot f_{i}
  =\min_{p}\{p\cdot f_{i}\colon \forall j,\ p\cdot f_{j} \geq \alpha_j\}\geq\alpha_i.
\end{equation}
Set $\mathcal{K} = \{f_1,\dots,f_n\}$ and $\dom \underline{P} = \mathcal{K}$.
Then,
\begin{align}
  \mathcal{M}_{\mathcal{D}}
  &= \bigcap_{i=1}^{n}\{p \in \Delta(\Omega)\colon p \cdot f_i \geq \underline{E}(f_i)\}
  \\
  &\subseteq \bigcap_{i=1}^{n}\{p \in \Delta(\Omega)\colon p \cdot f_i \geq \alpha_i\}
  = \mathcal{M}_{\underline{E}}.
\end{align}
\end{proof}

Consequently, if we want $\mathcal{M}_\mathcal{D}$ to have a sufficient number of extreme points, we should generate  $\mathcal{D}$ using either a polyhedral lower prevision, or at the very least using a linear-vacuous mixture.

\begin{figure}
\centering
\begin{tikzpicture}
\filldraw[draw=black, fill=gray!20] (0,0) node[anchor=north]{$a$}
  -- (2,0) node[anchor=north]{$b$}
  -- (1,1.732) node[anchor=south]{$c$}
  -- cycle;
\draw (1,-1) node[anchor=south]{(i)};  
\draw (0.02,1.3)--(0,1.45)--(2,2)--(2.03,1.9);

\draw (3,0) node[anchor=north]{$a$}
  -- (5,0) node[anchor=north]{$b$}
  -- (4,1.732) node[anchor=south]{$c$}
  -- cycle;
\draw (4,0.62) node{.};  
\draw (4,-1) node[anchor=south]{(ii)};  
\draw (3.02,0.35)--(3,0.5)--(5,0.75)--(5,0.6);
\draw (3.1,0.95)--(3.1,0.8)--(5.2,0.4)--(5.2,0.55);
\draw (3.4,1.95)--(3.5,2)--(4.35,-.3)--(4.25,-.34);

\draw (6,0) node[anchor=north]{$a$}
  -- (8,0) node[anchor=north]{$b$}
  -- (7,1.732) node[anchor=south]{$c$}
  -- cycle;
\filldraw[draw=black, fill=gray!20] (6.5,0.6)-- (7,1)-- (7.6,0.5)--(7.7,0.2)-- (6.8,0.2)-- cycle;
\draw (7,-1) node[anchor=south]{(iii)};  
\draw (5.8,0.35)--(5.8,.2)--(8.1,.2)--(8.1,0.35);
\draw (6.1,1.45)--(6.0,1.35)--(7.1,-.25)--(7.2,-.17);
\draw (5.8,-.1)--(5.75,0)--(7.55,1.45)--(7.65,1.35);
\draw (6.4,1.3)--(6.5,1.4)--(8.2,0)--(8.15,-.1);
\draw (7.1,1.77)--(7.2,1.8)--(7.8,-.1)--(7.7,-.15);

\draw (9,0) node[anchor=north]{$a$}
  -- (11,0) node[anchor=north]{$b$}
  -- (10,1.732) node[anchor=south]{$c$}
  -- cycle;
\filldraw[draw=black, fill=gray!20] (10,1)-- (9.7,0.4)--(10.3,0.4)-- cycle;
\draw (10,-1) node[anchor=south]{(iv)};  
\draw (8.8,0.55)--(8.8,0.4)--(11.2,0.4)--(11.2,0.55);
\draw (9.6,-.3)--(9.4,-.2)--(10.5,2)--(10.65,1.9);
\draw (9.35,1.9)--(9.47,2)--(10.62,-.2)--(10.5,-0.3);
\end{tikzpicture}
\caption{Constructing $\mathcal{M}_{\underline{E}}$ by finite half-spaces for $\Omega=\{a,b,c\}$: (i) vacuous, (ii) prevision, (iii) polyhedral lower prevision, (iv) linear-vacuous mixture.}\label{fig:2}
\end{figure}
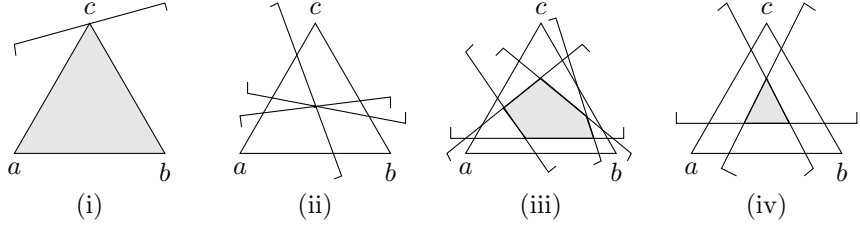

\Cref{fig:2} visualises the construction of the proof, for various classes of lower previsions.
\Cref{co:1} implies that we do not necessarily gain new extreme points as we add more and more gambles to $\mathcal{D}$. Therefore, \cref{alg:ASL} may not be a good way for generating $\mathcal{D}$ if our aim is to keep adding new extreme points with every gamble we add.
The algorithm in the next section addresses precisely this issue.

\section{Sequentially generating sets of desirable gambles}\label{sec7next}

Consider a set $\mathcal{E}=\{f_1,\dots,f_n\}$ that avoids sure loss.
How can we add another gamble, say $f$, for which $ \mathcal{E} \cup \{f\}$ either still avoids sure loss, or not?

\subsection{Generating sets of desirable gambles that do not avoid sure loss}

Given a gamble $g$, we first find the range of values for $\alpha$ such that $ \mathcal{E} \cup \{g-\alpha\}$ avoids sure loss. By the condition of avoiding sure loss in \cref{def:1}, we obtain a constraint on the values of $\alpha$ as follows: for all $ n \in \mathbb{N}$, all  $\lambda_{1}, \dots,\lambda_{n} \geq 0$, and all $f_{1}, \dots,f_{n} \in \mathcal{E}$,
\begin{equation}
\sup_{\omega \in \Omega}\left(\sum_{i=1}^{n} \lambda_if_i(\omega) +g(\omega) \right) \geq \alpha.
\end{equation}
The infimum of this upper bound is precisely $\overline{E}_{\mathcal{E}}(g)$. So, we proved:
\begin{corollary}\label{cor:1}
Let $\mathcal{E}$ avoid sure loss and let $g \in  \mathcal{L}(\Omega)$. $ \mathcal{E} \cup \{g-\alpha\}$ avoids sure loss if and only if $\alpha \leq \overline{E}_{\mathcal{E}}(g)$.
\end{corollary}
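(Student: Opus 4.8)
The plan is to apply \cref{def:1} directly to the enlarged set $\mathcal{E}\cup\{g-\alpha\}$ and to collapse the resulting infinite family of inequalities into a single condition on $\alpha$. A generic non-negative combination of gambles drawn from $\mathcal{E}\cup\{g-\alpha\}$ has the form $\sum_{i=1}^{n}\lambda_i f_i+\mu(g-\alpha)$ with $f_i\in\mathcal{E}$, $\lambda_i\ge 0$ and $\mu\ge 0$, where $\mu$ records the total coefficient placed on the new gamble. Thus $\mathcal{E}\cup\{g-\alpha\}$ avoids sure loss exactly when $\max_{\omega}(\sum_i\lambda_i f_i(\omega)+\mu(g(\omega)-\alpha))\ge 0$ for every such choice.

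First I would split according to whether $\mu=0$ or $\mu>0$. When $\mu=0$ the combination lies entirely in $\mathcal{E}$, so the inequality holds automatically because $\mathcal{E}$ already avoids sure loss; these terms therefore impose no constraint on $\alpha$. When $\mu>0$, positive homogeneity of the maximum lets me divide through by $\mu$ and absorb the factor into the $\lambda_i$, so without loss of generality $\mu=1$. The surviving requirement is precisely $\max_{\omega}(\sum_i\lambda_i f_i(\omega)+g(\omega))\ge\alpha$ for all $\lambda_i\ge 0$ and all $f_i\in\mathcal{E}$, which is the displayed inequality in the excerpt.

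Next I would take the infimum over all admissible $\lambda$ and $f$: the enlarged set avoids sure loss if and only if $\alpha\le\inf_{\lambda,f}\max_{\omega}(\sum_i\lambda_i f_i(\omega)+g(\omega))$. It then remains to identify this infimum with $\overline{E}_{\mathcal{E}}(g)$. Using conjugacy $\overline{E}_{\mathcal{E}}(g)=-\underline{E}_{\mathcal{E}}(-g)$ together with the definition of $\underline{E}_{\mathcal{E}}$ in \cref{eq:6.11}, the defining constraint $-g-\beta\ge\sum_i\lambda_i f_i$ rewrites pointwise as $\beta\le-\max_{\omega}(\sum_i\lambda_i f_i(\omega)+g(\omega))$, so the supremum defining $\underline{E}_{\mathcal{E}}(-g)$ equals $-\inf_{\lambda,f}\max_{\omega}(\sum_i\lambda_i f_i(\omega)+g(\omega))$; negating yields the required identification and completes the equivalence.

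The main obstacle I anticipate is bookkeeping at the boundary rather than any deep idea. I must justify that the $\mu=0$ terms are genuinely redundant, so that only the normalised $\mu=1$ family controls $\alpha$, and I must handle carefully the passage between the supremum defining $\underline{E}_{\mathcal{E}}$ and the infimum appearing in the avoiding-sure-loss condition, since a supremum over $\beta$ subject to an \emph{existential} constraint on $(\lambda,f)$ must be rewritten as a supremum over $(\lambda,f)$ of the pointwise bound. Verifying that the weak inequalities, suprema and infima align — in particular that the characterisation is ``$\le\overline{E}_{\mathcal{E}}(g)$'' and not a strict inequality — is where I would take the most care.
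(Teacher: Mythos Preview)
Your proposal is correct and follows essentially the same route as the paper: reduce the avoiding-sure-loss condition on $\mathcal{E}\cup\{g-\alpha\}$ to the family of inequalities $\max_\omega\bigl(\sum_i\lambda_i f_i(\omega)+g(\omega)\bigr)\ge\alpha$, then identify the infimum of the left-hand side with $\overline{E}_{\mathcal{E}}(g)$. The paper's argument is in fact just the two sentences preceding the corollary and omits the $\mu=0$/$\mu>0$ split and the explicit conjugacy computation that you supply; your version is simply a more carefully worked-out form of the same idea.
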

Hence, if we set $\alpha > \overline{E}_{\mathcal{E}}(g)$, then $ \mathcal{E} \cup \{g-\alpha\}$ does not avoid sure loss:
\begin{breakablealgorithm}
\caption{Generate a set of desirable gambles that does not avoid sure loss}
\label{alg:NASL}
\begin{algorithmic}
\REQUIRE \parbox[t]{\textwidth}{Set of outcomes $\Omega$\\
  A set of desirable gambles $\mathcal{E}$ that avoids sure loss\\
  $\delta>0$ (e.g. sample  $\delta$ uniformly from $(0,1)$)}
\ENSURE A set of desirable gambles $\mathcal{D}$ that does not avoid sure loss.
 \begin{enumerate}
\item[Stage 1.] For each $\omega\in \Omega$, sample $g(\omega)$ uniformly from $(0,1)$.
\item[Stage 2.]  Solve the following linear program:
\begin{align}
\min \quad & \beta  \\ 
    \text{s. t.} \quad & \forall \omega \in \Omega\colon \sum_{f_i\in\mathcal{E}} f_{i}(\omega)\lambda_{i} -\beta \leq  -g(\omega),\quad
   \lambda_i \geq 0 \quad(\beta \text { free}). 
\end{align}
\item[Stage 3.]  Set $\mathcal{D}\coloneqq \mathcal{E} \cup\{  g-\beta-\delta\}$.
\end{enumerate}
\end{algorithmic}
\end{breakablealgorithm}

Note that sets of gambles generated by \cref{alg:NASL} only contain a single gamble that violate consistency. Therefore, they are the most computationally challenging sets to detect not avoiding sure loss. Consequently, they are the most suitable sets for benchmarking, as any measurable improvement on these cases implies an at least as large improvement on any simpler cases.

\subsection{Generating sets of desirable gambles that avoid sure loss}

Consider a coherent lower prevision $\underline{Q}$. The set $\mathcal{E}\coloneqq\{f-\underline{Q}(f)\colon f \in \dom\underline{Q}\}$ then avoids sure loss. Let $g \in \mathcal{L}(\Omega)\setminus \dom\underline{Q}$. By \cref{cor:1}, we know that the larger set $\mathcal{D}:= \mathcal{E}\cup \{g-\alpha\}$ still avoids sure loss as long as $\alpha \leq \overline{E}_{\mathcal{E}}(g)$. 
Note that the number of extreme points can decrease after adding $\{g-\alpha\}$, as shown in \cref{fig:3}.
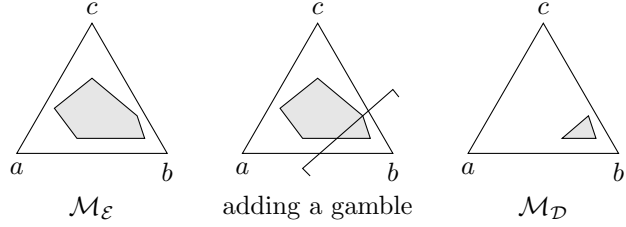
\begin{figure}[ht]
\centering
\begin{tikzpicture}
\draw (0,0) node[anchor=north]{$a$}
  -- (2,0) node[anchor=north]{$b$}
  -- (1,1.732) node[anchor=south]{$c$}
  -- cycle;
\filldraw[draw=black, fill=gray!20] (0.5,0.6)-- (1,1)-- (1.6,0.5)--(1.7,0.2)-- (0.8,0.2)-- cycle;
\draw (1,-1) node[anchor=south]{$\mathcal{M}_{\mathcal{E}}$}; 

\draw (3,0) node[anchor=north]{$a$}
  -- (5,0) node[anchor=north]{$b$}
  -- (4,1.732) node[anchor=south]{$c$}
  -- cycle;
\filldraw[draw=black, fill=gray!20] (3.5,0.6)-- (4,1)-- (4.6,0.5)--(4.7,0.2)-- (3.8,0.2)-- cycle;
\draw (4,-1) node[anchor=south]{adding a gamble}; 
\draw (5.08,0.75)--(5,0.85)--(3.8,-0.2)--(3.9,-0.3);

\draw (6,0) node[anchor=north]{$a$}
  -- (8,0) node[anchor=north]{$b$}
  -- (7,1.732) node[anchor=south]{$c$}
  -- cycle;
\draw (7,-1) node[anchor=south]{$\mathcal{M}_{\mathcal{D}}$}; 
\filldraw[draw=black, fill=gray!20]  (7.6,0.5)--(7.7,0.2)-- (7.25,0.2)-- cycle;

\end{tikzpicture}
\caption{Simplex representation of a credal sets after adding a gamble.}\label{fig:3}
\end{figure}

How should we choose $\alpha$ to avoid reducing the number of extreme points?
If $\underline{P}$ is a coherent extension
of $\underline{Q}$ to $\dom\underline{Q}\cup\{g\}$,
then $\mathcal{M}_{\underline{P}}$ must have
at least as many extreme points as $\mathcal{M}_{\underline{Q}}$,
because, by coherence,
for every $f\in\dom\underline{Q}$,
$\underline{P}(f)$ and $\underline{Q}(f)$ must be achieved
at some extreme point of $\mathcal{M}_{\underline{P}}$
and $\mathcal{M}_{\underline{Q}}$, respectively \citep[p.~126]{1991:walley}.
But because $\underline{P}(f)=\underline{Q}(f)$ for all these gambles $f$,
it cannot be that $\mathcal{M}_{\underline{P}}$ has fewer extreme
points than $\mathcal{M}_{\underline{Q}}$,
because otherwise $\underline{P}(f)>\underline{Q}(f)$
for at least one gamble $f$.
Hence, the number of extreme points does not decrease if we keep coherence.

\begin{theorem}(adapted from \citep[p.~126]{1991:walley})
Let $\underline{Q}$ be a coherent lower prevision and let $g\in \mathcal{L}(\Omega)\setminus\dom\underline{Q}$.
Let $\underline{P}$ be an extension of $\underline{Q}$ to $\dom\underline{Q} \cup\{g\}$. Then $\underline{P}$ is coherent if and only if $\underline{P}(g)\in [\underline{E}_{\underline{Q}}(g), \underset{^\sim}{E}{}_{\underline{Q}}(g)]$, where 
\begin{multline}
\underset{^\sim}{E}{}_{\underline{Q}}(g) \coloneqq \inf_{f_i\in\dom\underline{Q},\,\lambda_i\ge 0} \Bigg\lbrace \\
\max_{\omega \in \Omega} \left( g(\omega) + \sum_{i=1}^{n} \lambda_i ( f_i(\omega)-\underline{Q}(f_i)))- \lambda_0(f_0(\omega) -\underline{Q}(f_0)) \right)\Bigg\rbrace.
\end{multline}
\end{theorem}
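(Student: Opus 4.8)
The plan is to unfold the coherence condition \cref{eq8.2} for $\underline{P}$ on the enlarged domain $\dom\underline{Q}\cup\{g\}$ and show that the entire family of coherence inequalities is equivalent to three groups: those not involving $g$ (which hold because $\underline{Q}$ is coherent), those forcing the upper bound $\underline{P}(g)\le\underset{^\sim}{E}{}_{\underline{Q}}(g)$, and those forcing the lower bound $\underline{P}(g)\ge\underline{E}_{\underline{Q}}(g)$. Because the reduction is an equivalence, both the ``only if'' and the ``if'' directions fall out of a single case analysis.

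First I would classify an arbitrary coherence inequality by how the distinguished gamble $g$ enters: whether $g$ occurs among the added gambles $f_1,\dots,f_n$ (with aggregate coefficient $\mu\ge0$) and whether $g$ is chosen as the subtracted gamble $f_0$ (with coefficient $\lambda_0$). Since every copy of $g$ carries the same value $\underline{P}(g)$, I can collect them and work with the single net coefficient $c$ of $(g-\underline{P}(g))$. The crucial structural remark is that the coherence definition allows only one subtracted gamble: if $g$ itself occupies that slot then no member of $\dom\underline{Q}$ may be subtracted, whereas if $g$ is merely added then the slot is free for a gamble of $\dom\underline{Q}$. This is precisely what produces the asymmetry between the two endpoints.

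Next I would treat the three regimes. When $c>0$, rescaling by $c$ and isolating $\underline{P}(g)$ turns the inequality into $\underline{P}(g)\le\max_\omega(g(\omega)+\sum_i\lambda_i(f_i(\omega)-\underline{Q}(f_i))-\lambda_0(f_0(\omega)-\underline{Q}(f_0)))$ with $f_i,f_0\in\dom\underline{Q}$; taking the infimum over all such configurations yields exactly $\underline{P}(g)\le\underset{^\sim}{E}{}_{\underline{Q}}(g)$. When $c<0$, the subtracted gamble is forced to be $g$, so only added members of $\dom\underline{Q}$ survive; rescaling and isolating $\underline{P}(g)$ gives $\underline{P}(g)\ge\inf_\omega(g(\omega)-\sum_i\lambda_i(f_i(\omega)-\underline{Q}(f_i)))$, whose supremum over configurations is the natural extension $\underline{E}_{\underline{Q}}(g)$ (note the absence of any subtracted term, matching its definition). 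When $c=0$ the $g$-terms cancel and the inequality reduces to a coherence inequality for $\underline{Q}$ with $\lambda_0=0$, which holds by hypothesis. For necessity I would simply instantiate the $c>0$ and $c<0$ families with arbitrary $\dom\underline{Q}$-configurations to extract the two bounds; for sufficiency, the same classification shows every coherence inequality for $\underline{P}$ is implied by the two bounds together with coherence of $\underline{Q}$.

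I expect the main obstacle to be the bookkeeping in the aggregation step: verifying rigorously that an arbitrary coherence inequality, in which $g$ may appear simultaneously as an added gamble and as the subtracted $f_0$, and with repetitions, collapses without loss of generality to one of the three canonical forms, and in particular that the single-$f_0$ restriction correctly \emph{forbids} a $\dom\underline{Q}$-subtraction in the lower-bound regime while \emph{permitting} one in the upper-bound regime. Some care is also needed at the boundary $c=0$ and when $\mu=0$, to confirm these contribute nothing beyond the assumed coherence of $\underline{Q}$.
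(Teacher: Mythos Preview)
The paper does not supply its own proof of this theorem: it is stated with a citation to Walley \cite[p.~126]{1991:walley} and no argument is given in the body or the appendix. There is therefore nothing in the paper to compare your proposal against.

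For what it is worth, your outline is the standard proof and is sound. The one small slip is in the $c=0$ case: you say the residual inequality is a coherence inequality for $\underline{Q}$ ``with $\lambda_0=0$'', but when $g$ does not appear at all (your $\mu=0$ sub-case with $f_0\in\dom\underline{Q}$) the subtracted term $\lambda_0(f_0-\underline{Q}(f_0))$ is still present and $\lambda_0$ need not vanish; coherence of $\underline{Q}$ (not merely avoiding sure loss) is what handles it. You already flag this boundary case as needing care, so the gap is cosmetic rather than substantive.
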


\Cref{fig:4} shows ranges of avoiding sure loss and coherence of $g$ given a coherent lower prevision $\underline{Q}$.
\begin{figure}
\centering
\begin{tikzpicture}
\draw (1,0) -- (2,0)node[anchor=north]{$\underline{E}_{\underline{Q}}(g)$} --(5,0)node[anchor=north]{$\underset{^\sim}{E}{}_{\underline{Q}}(g)$} --(8,0)node[anchor=north]{$\overline{E}_{\underline{Q}}(g)$}
  -- (10,0);
\draw   (2.2,0.3)--(2,0.3);
\draw (2,-0.1)--(2,0.3);
  \draw   (4.8,0.3)--(5,0.3);
\draw (5,-0.1)--(5,0.3);
\draw (3.5,0) node[anchor=south]{coherence}; 
\draw (6,0.4) node[anchor=south]{avoiding sure loss}; 
\draw   (8,0.7)--(8,0);
\draw   (8,0.7) -- (7.8,0.7);
\draw[->]   (4.2,0.7)--(3,0.7);
\end{tikzpicture}
\caption{Ranges of avoiding sure loss and coherence}\label{fig:4}
\end{figure}
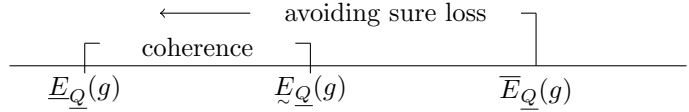

We can calculate $\underline{E}_{\underline{Q}}(g)$ by solving a single linear programming problem. However, $\underset{^\sim}{E}{}_{\underline{Q}}(g)$ cannot be obtained by solving just a single linear programming problem:
we have to solve a separate linear programming problem for every $f_0\in\dom\underline{Q}$,
where each separate linear program is very similar to and has the same size as
the linear program for calculating $\overline{E}_{\underline{Q}}(g)$.

So, instead of finding $\underset{^\sim}{E}{}_{\underline{Q}}(g)$, we calculate $\underline{E}_{\underline{Q}}(g)$ and $\overline{E}_{\underline{Q}}(g)$ (or $\overline{E}_{\mathcal{E}}(g)$ where $\mathcal{E}= \{f-\underline{Q}(f)\colon f \in \dom \underline{Q}\}$) since for each of them, we solve only one linear programming problem. Next, we choose a very small number $\delta$ and set $\underline{P}(g)\coloneqq (1-\delta)\underline{E}_{\underline{Q}}(g)+\delta\overline{E}_{\underline{Q}}(g)$. Then, $\underline{P}(g)$ is slightly larger than $\underline{E}_{\underline{Q}}(g)$, but $\underline{P}(g)$ is still less than $\overline{E}_{\underline{Q}}(g)$. Therefore $\mathcal{E}\cup \{g-\underline{P}(g)\}$ still avoids sure loss as we want. 
This approach is summarised in \cref{alg:ASL2}.
Because only the natural extension of $\underline{Q}$ is used, we only need $\underline{Q}$ to avoid sure loss.

\begin{breakablealgorithm}
\caption{Generate a set of desirable gambles that avoids sure loss}
\label{alg:ASL2}
\begin{algorithmic}
\REQUIRE \parbox[t]{\textwidth}{Set of outcomes $\Omega$\\
A lower prevision $\underline{Q}$ that avoids sure loss (e.g. Algorithms \ref{alg:Exp}, \ref{alg:poly} or \ref{alg:lin-vac})\\
A set $\mathcal{E}=\{f-\underline{Q}(f)\colon f\in \dom\underline{Q}\}$ that avoids sure loss \\ $\delta\in(0,1)$}
\ENSURE A larger set of desirable gambles $\mathcal{D}$ that avoids sure loss
 \begin{enumerate}
\item[Stage 1.] For each $\omega\in \Omega$, sample $g(\omega)$ uniformly from $(0,1)$.
\item[Stage 2.] Calculate $\overline{E}_{\underline{Q}}(g)$ by solving
\begin{align}
\min \quad & \beta  \\ 
    \text{subject to}\quad & \forall \omega \in \Omega\colon \sum_{i=1}^n (f_{i}(\omega)-\underline{Q}(f_i))\lambda_{i} -\beta \leq  -g(\omega) \\
      & \lambda_i \geq 0,\, f_{i}(\omega)-\underline{Q}(f_i) \in \mathcal{E}\qquad(\beta \text{ free})
\end{align}
\item[Stage 3.] Calculate $\underline{E}_{\underline{Q}}(g)$ by solving
\begin{align}
\max \quad &\gamma \\ 
\text{subject to} \quad
& \forall \omega \in \Omega\colon \sum_{i=1}^{n}(f_{i}(\omega)-\underline{Q}(f_i)) \lambda_{i} +\gamma \leq  g(\omega) \\
& \lambda_i \geq 0,\,  f_{i}(\omega)-\underline{Q}(f_i) \in \mathcal{E}\qquad(\gamma \text { free}). 
\end{align}
\item[Stage 4.] Set $\underline{P}(g)\coloneqq(1-\delta)\underline{E}_{\underline{Q}}(g)+\delta\overline{E}_{\underline{Q}}(g)$.
\item[Stage 5.] Set $\mathcal{D}\coloneqq \mathcal{E} \cup\{  g-\underline{P}(g)\}$.
\end{enumerate}
\end{algorithmic}
\end{breakablealgorithm}

Note that we will not use \cref{alg:ASL2} in our benchmarking. Instead, we use a combination of \cref{alg:poly} and \cref{alg:ASL}, which generates constraints via a set of probability mass functions, as this is computationally faster. We state \cref{alg:ASL2} for the sake of completeness, as an alternative algorithm for generating constraints directly.

\section{Numerical results}\label{sec8}

To benchmark our theoretical result, in this section, we generate two types of random sets of desirable gambles: sets that avoid sure loss, and sets that do not. For each type, we consider $|\mathcal{D}| = 2^i$ for $i \in \{1,2,\dots, 8\}$ and $|\Omega| = 2^j$ for $j \in \{1,2,\dots, 8\}$. Random sets that avoid sure loss are generated as follows:

\begin{enumerate}
\item We use \cref{alg:Exp} to generate $k$ coherent previsions. We fixed $k = 2^5$, as we observed that varying $k$ has little impact on the results.
\item From these $k$ coherent previsions, we use \cref{alg:poly} to generate a polyhedral lower prevision.
\item We use \cref{alg:ASL}, with this polyhedral lower prevision, to generate a random set that avoids sure loss.
\end{enumerate}
Next, starting from a set $\mathcal{E}$ that avoids sure loss, we generate a set that does not avoid sure loss using \cref{alg:NASL} with $\delta = 0.05$. 

For each random set of desirable gambles, we use the algorithms for
checking avoiding sure loss discussed earlier in the paper. \Cref{table:1} gives an overview of these different algorithms. Note that the primal-dual method simultaneously solves the primal and the dual problems.

\begin{table}
\centering
\begin{tabular}{|c|c|c|c|}
\hline
\multicolumn{1}{|c|}{\multirow{2}{*}{Linear programs}} & \multicolumn{3}{c|}{Methods}                                                 \\ \cline{2-4} 
\multicolumn{1}{|c|}{}                  & \multicolumn{1}{c|}{Simplex} & \multicolumn{1}{c|}{Affine scaling} & \multicolumn{1}{c|}{Primal-dual} \\ \hline
\linprogref{P3} & \checkmark & \checkmark &$\checkmark$\\
\hline
\linprogref{D3} & \checkmark &  &\\
\hline
\linprogref{D4'} &  & \checkmark &$\checkmark$\\
\hline   
\end{tabular}
\caption{List of different methods for checking avoiding sure loss. Note that the primal-dual method also solves the dual problem simultaneously, e.g. \linprogref{D5} for \linprogref{P3} and \linprogref{P4'} for \linprogref{D4'}.}
\label{table:1}
\end{table}

To compare these three methods, we wrote our own implementation of the affine scaling and the primal-dual methods. We used an implementation of the revised simplex method written by \citet{2002:Strang}. Indeed, the revised simplex method is mathematically equivalent to the standard simplex method, but is much more efficient and numerically stable as it applies sparse matrix manipulations \citep[\S 3.7]{1993:Fang:Puthenpura}. We used the revised simplex method to solve both problems \linprogref{P3} and \linprogref{D3}. For the affine scaling method, a standard version is used for solving \linprogref{D4'}, while an improved version, which includes the extra stopping criterion and our mechanism for calculating feasible starting points, is used to solve \linprogref{P3}. For the primal-dual method, an improved version that has the extra stopping criterion and the mechanism for calculating feasible starting points is used for solving \linprogref{P3} and \linprogref{D5}, and another improved version that has only the mechanism for calculating feasible starting points is used to solve \linprogref{D4'} and \linprogref{P4'}.

\begin{figure}
  \centering
  \newcolumntype{C}{>{\centering\arraybackslash} m{0.4\linewidth} }
  \begin{tabular}{m{1em}CC}
  &
  Avoiding sure loss
  &
  Not avoiding sure loss
  \\
  \rotatebox[origin=l]{90}{$|\mathcal{D}| = 2^2$}
  &
  \includegraphics[width=\hsize]{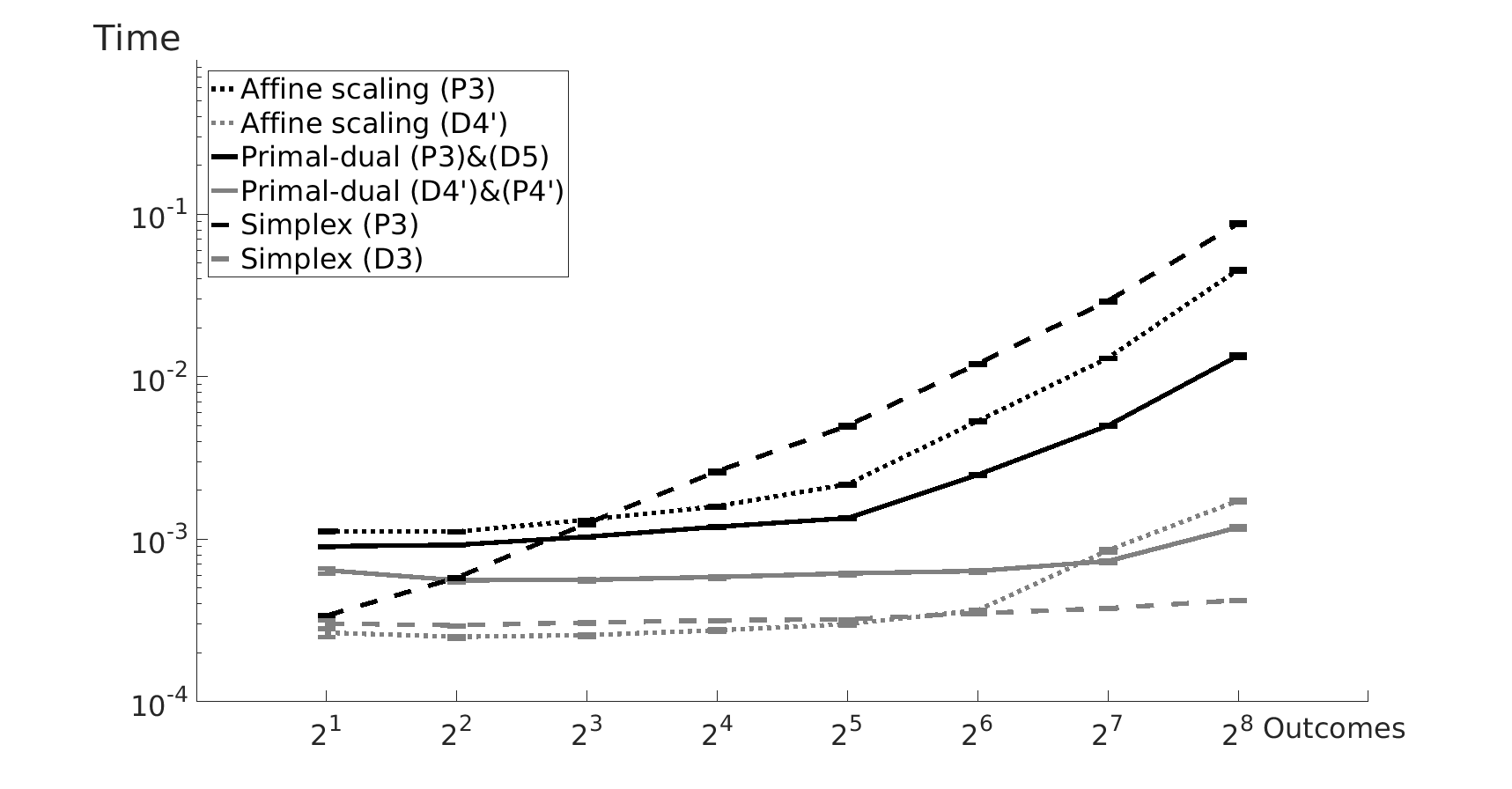}
  &
  \includegraphics[width=\hsize]{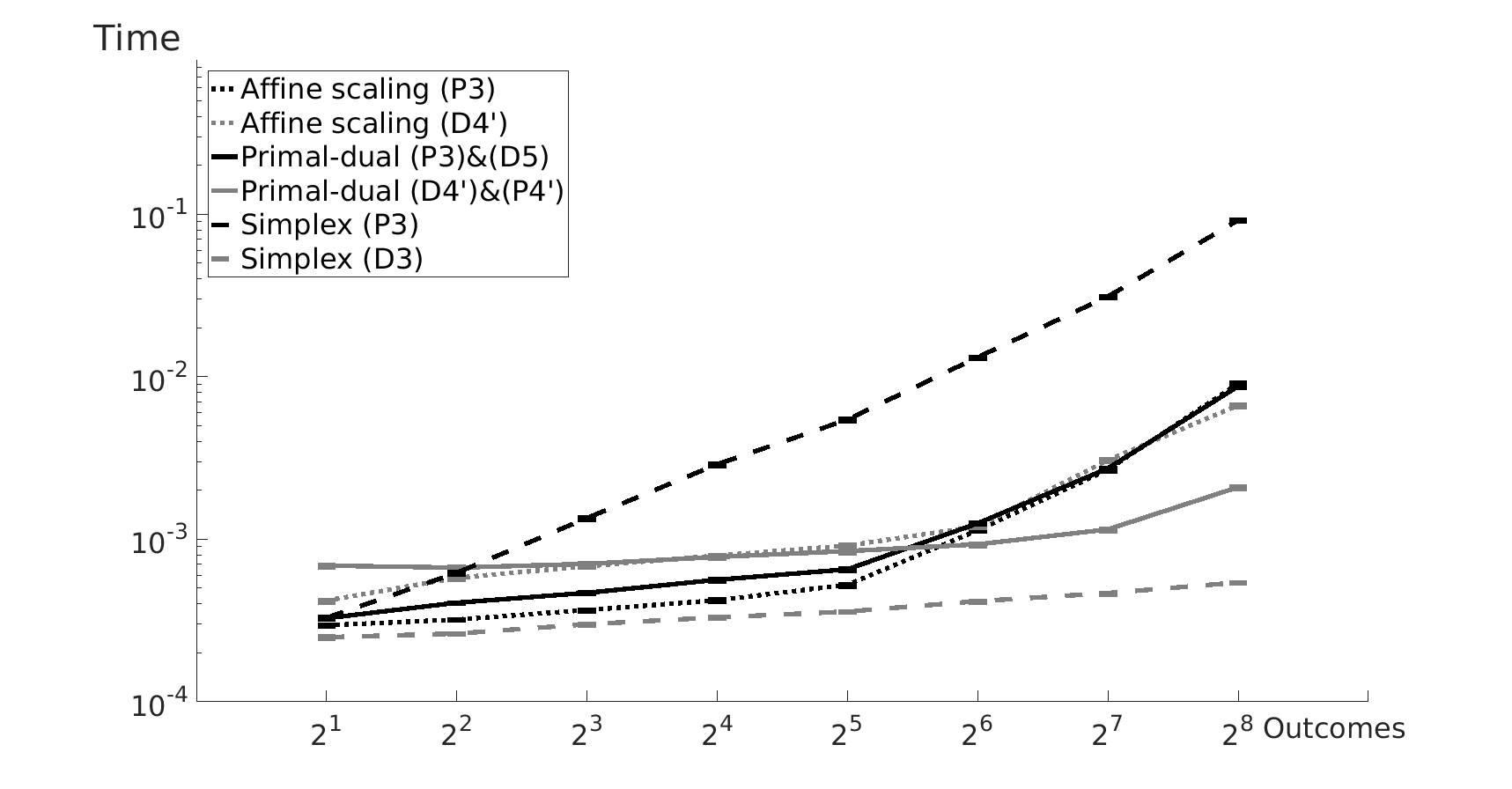}
  \\
  \rotatebox[origin=l]{90}{$|\mathcal{D}| = 2^4$}
  &
  \includegraphics[width=\hsize]{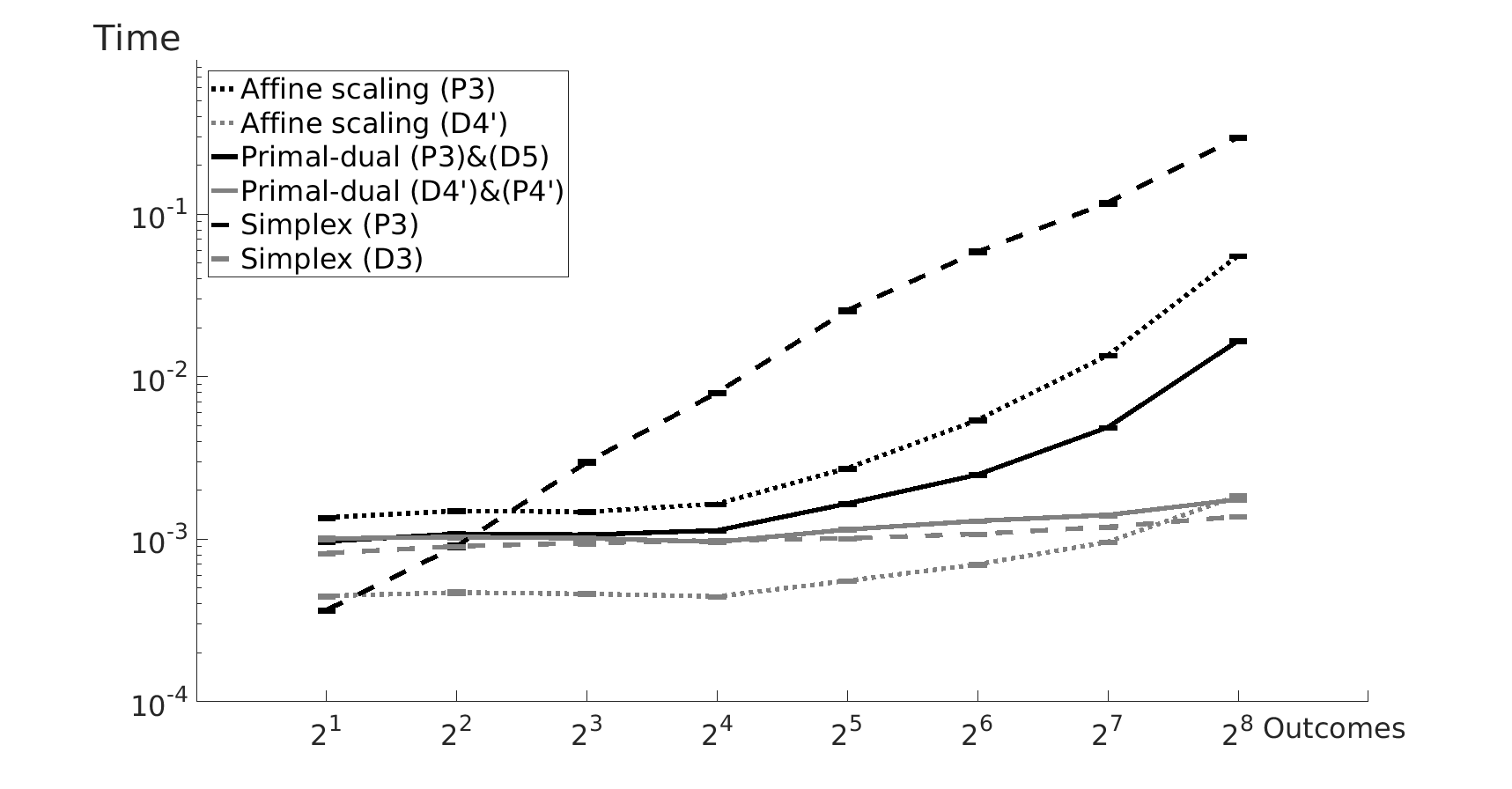}
  &
  \includegraphics[width=\hsize]{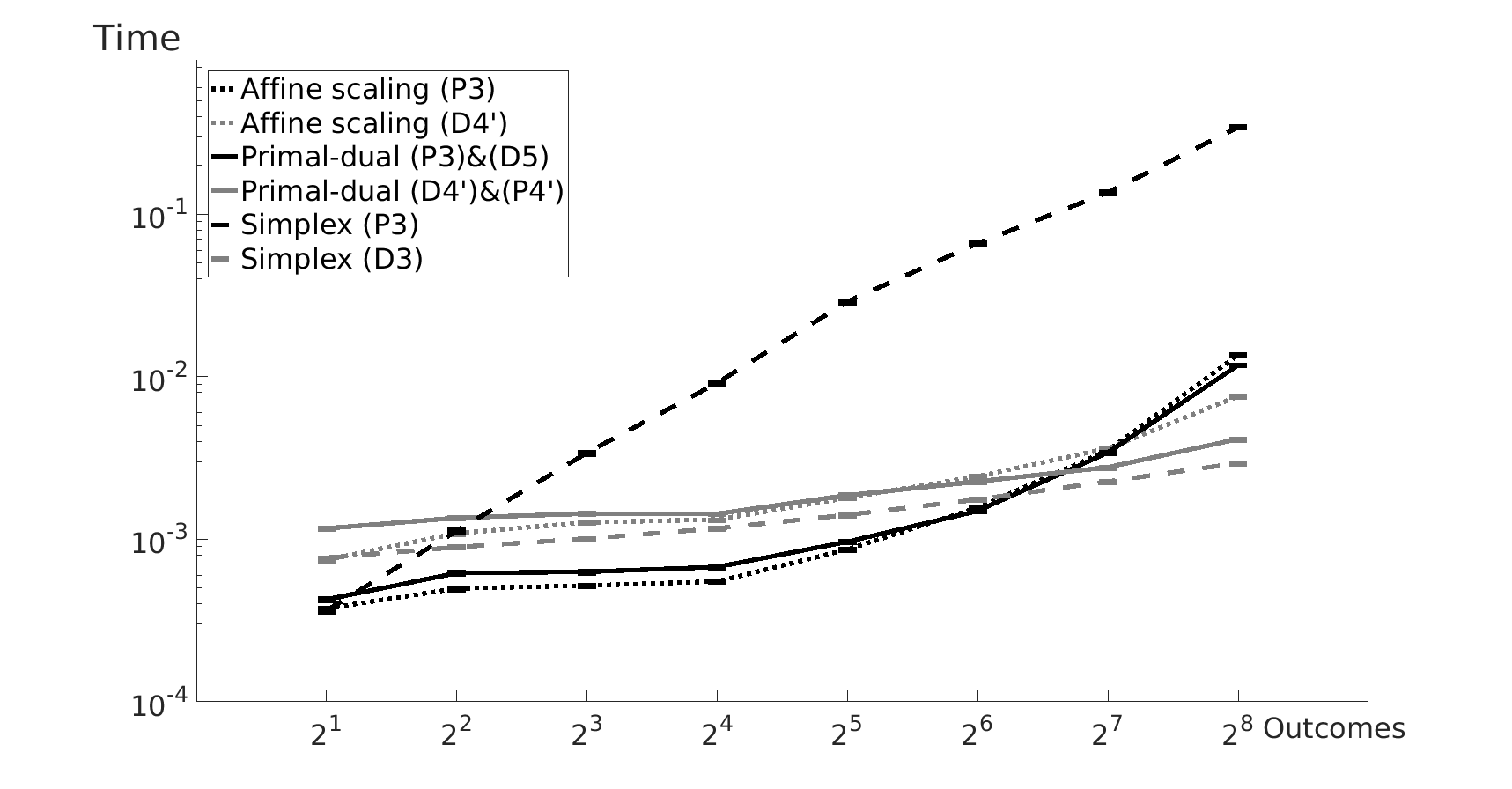}
  \\
  \rotatebox[origin=l]{90}{$|\mathcal{D}| = 2^6$}
  &
  \includegraphics[width=\hsize]{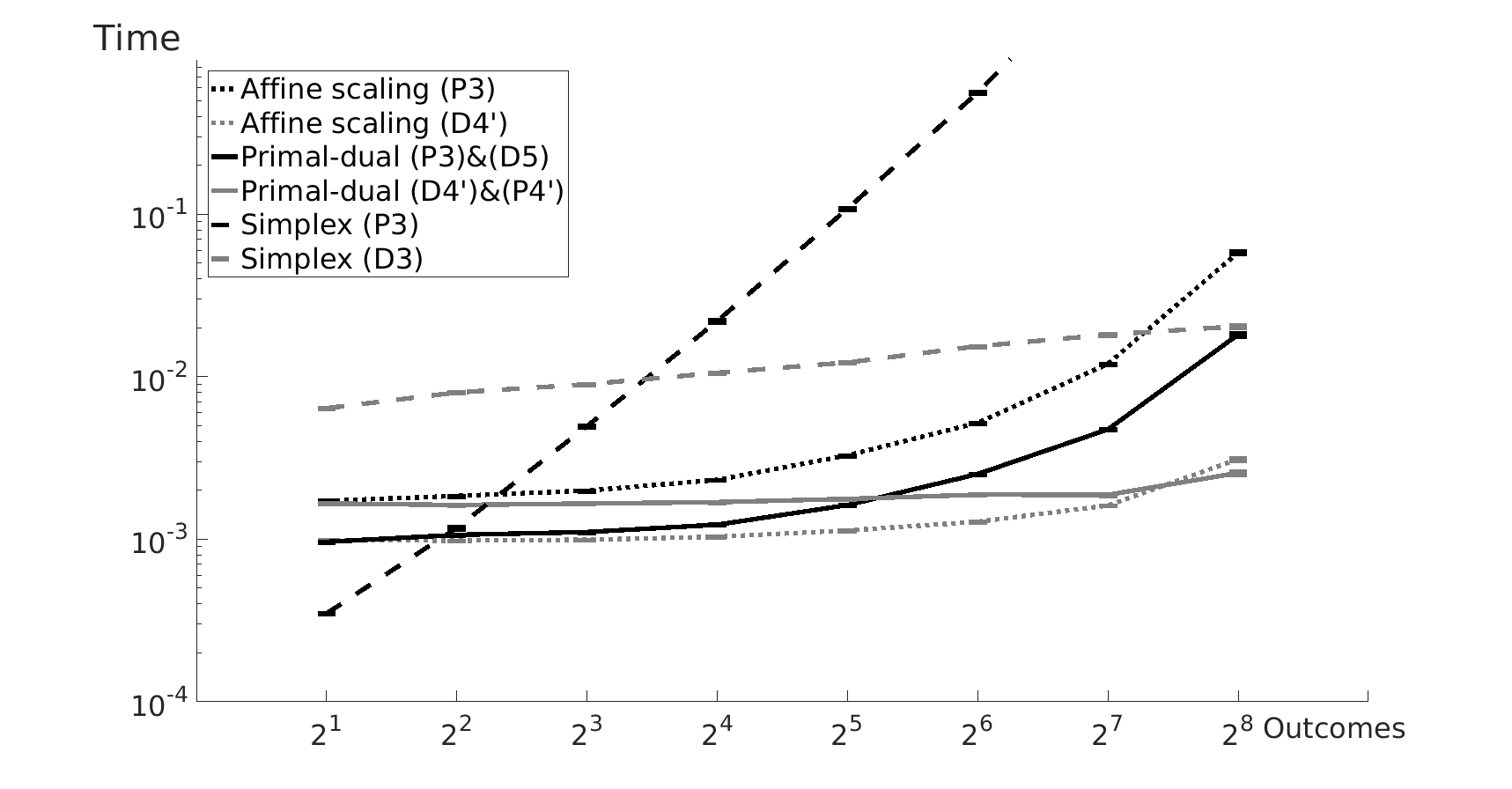}
  &
  \includegraphics[width=\hsize]{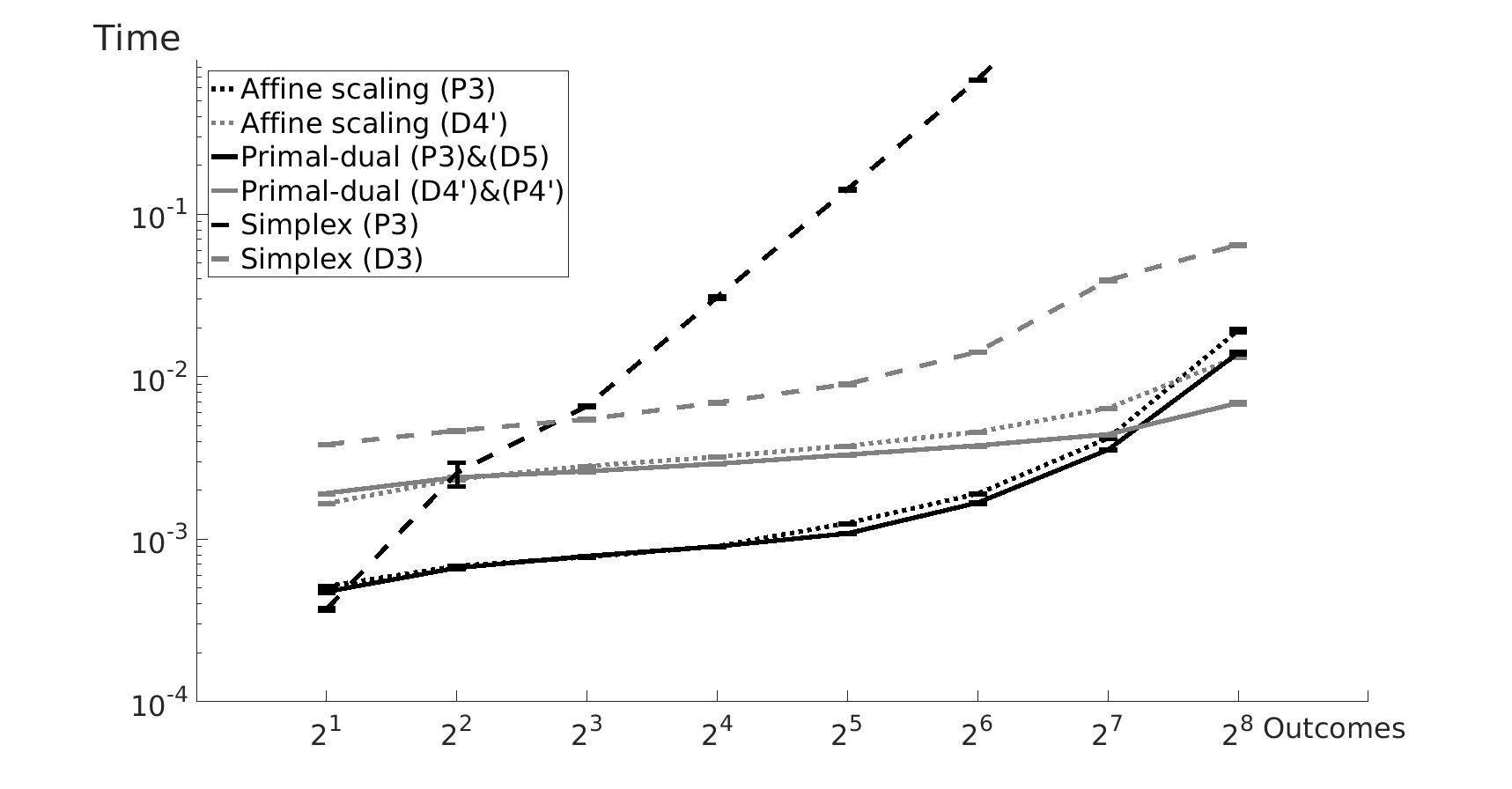}
  \\
  \rotatebox[origin=l]{90}{$|\mathcal{D}| = 2^8$}
  &
  \includegraphics[width=\hsize]{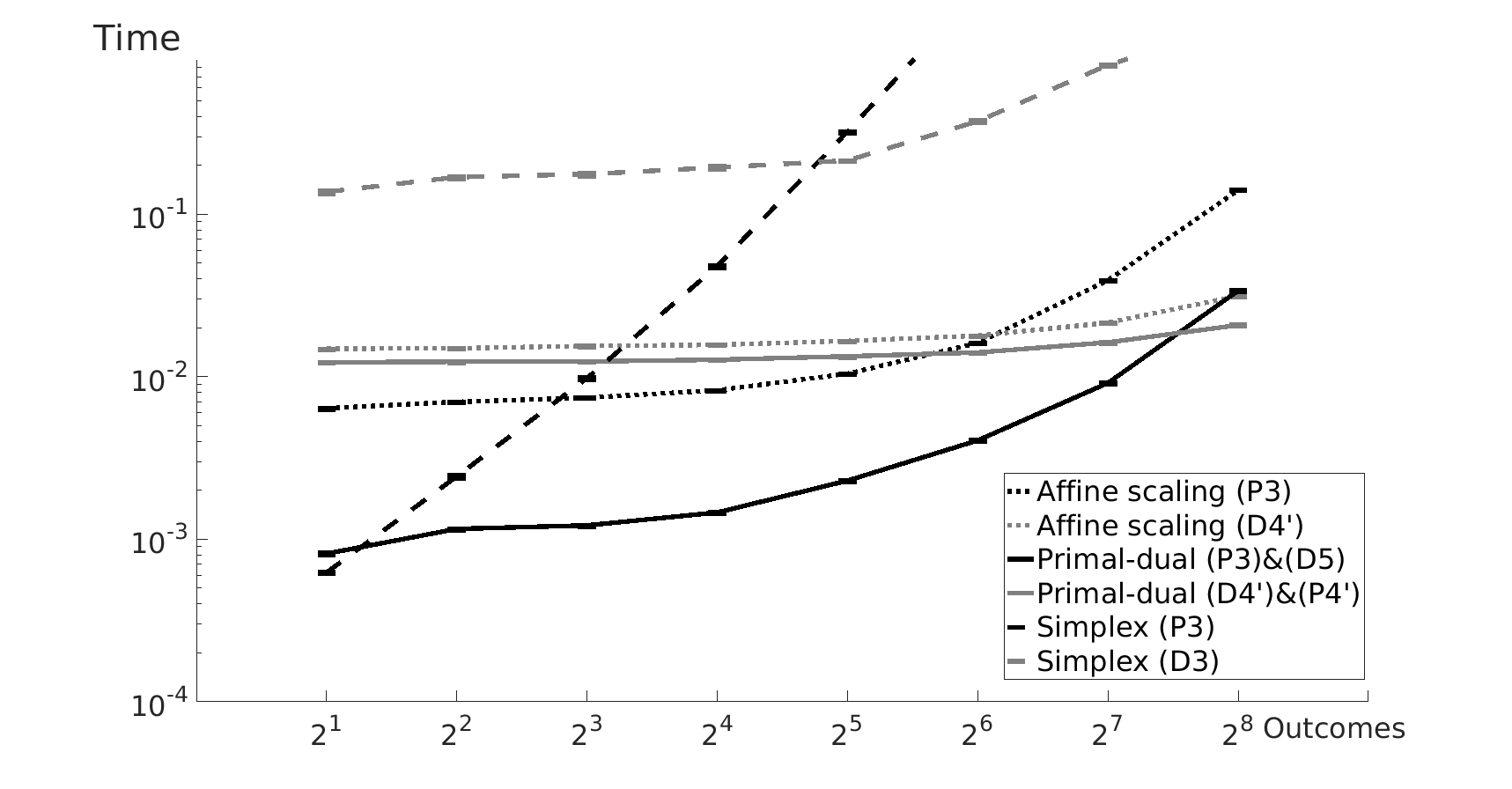}
  &
  \includegraphics[width=\hsize]{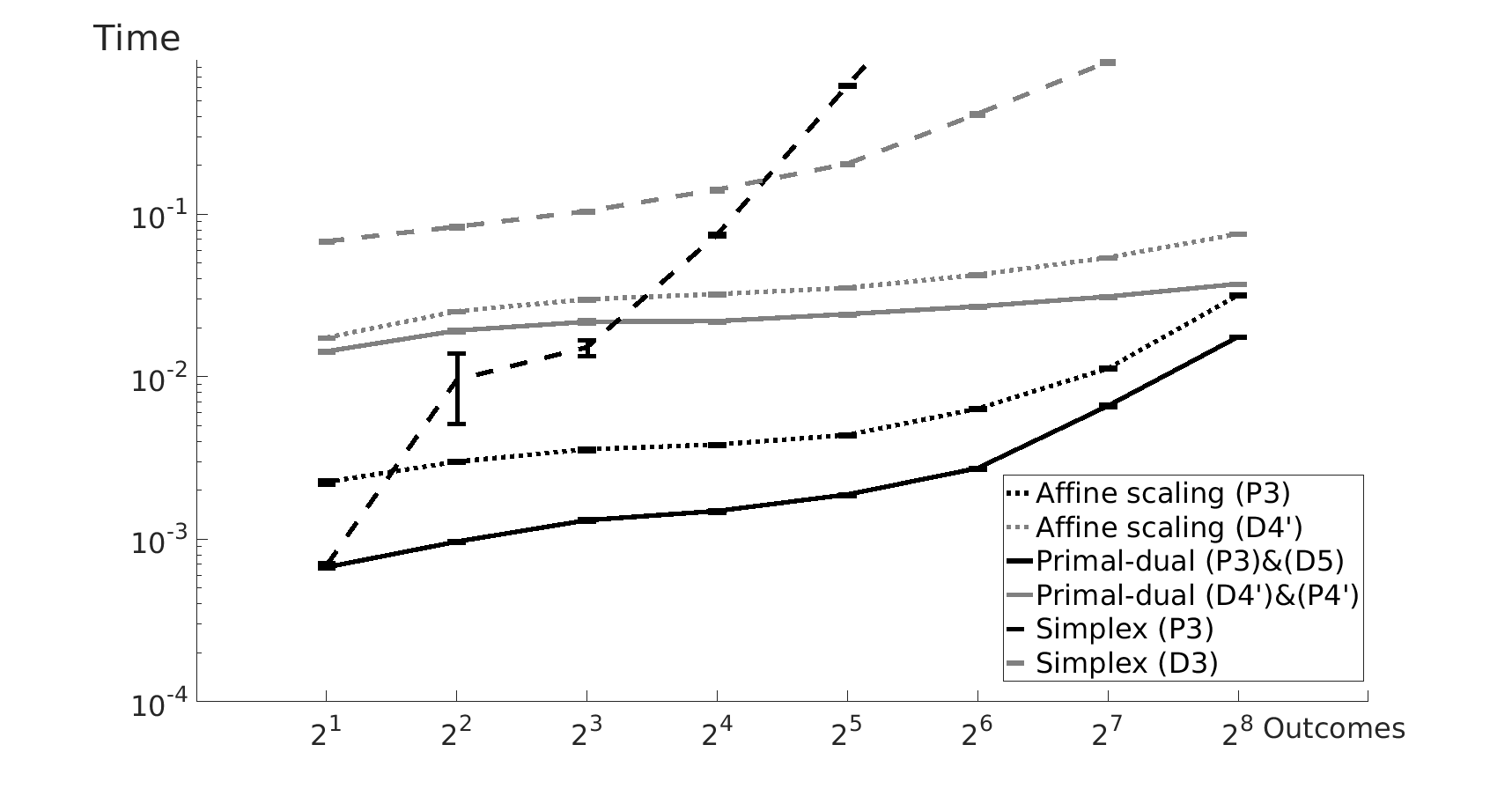}
\end{tabular}
\caption{Comparison plots of the average computational time for three methods. The left column avoids sure loss and the right column does not.  Each row represents a different number of desirable gambles with vary the number of outcomes.  The labels indicate linear programs solved by different methods.}
\label{fig:6}
\end{figure}

\begin{figure} 
  \centering
  \newcolumntype{C}{>{\centering\arraybackslash} m{0.4\linewidth} }
  \begin{tabular}{m{1em}CC}
  &
  Avoiding sure loss
  &
  Not avoiding sure loss
  \\
  \rotatebox[origin=l]{90}{$|\Omega| = 2^2$}
  &
  \includegraphics[width=\hsize]{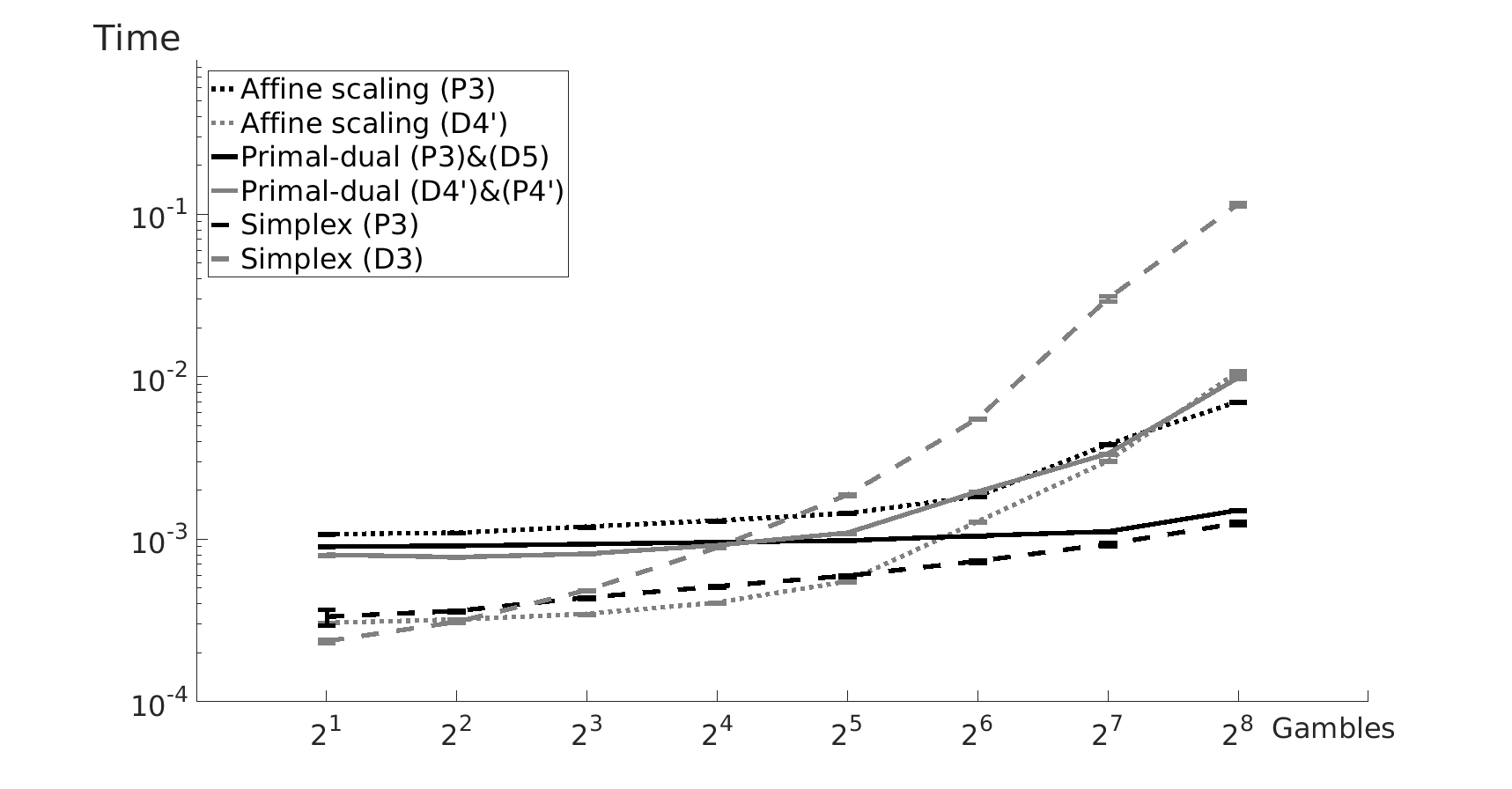}
  &
  \includegraphics[width=\hsize]{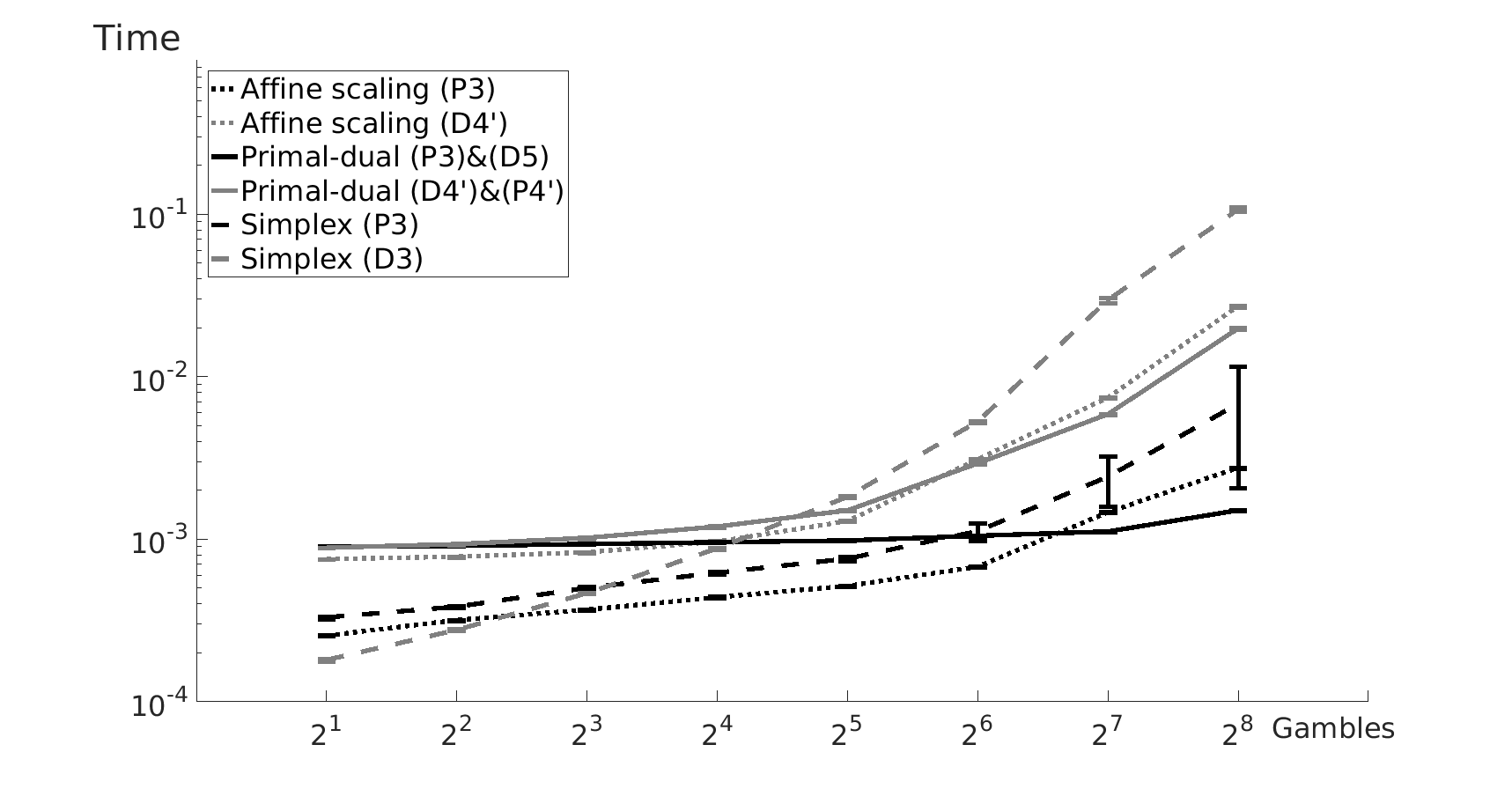}
  \\
  \rotatebox[origin=l]{90}{$|\Omega| = 2^4$}
  &
  \includegraphics[width=\hsize]{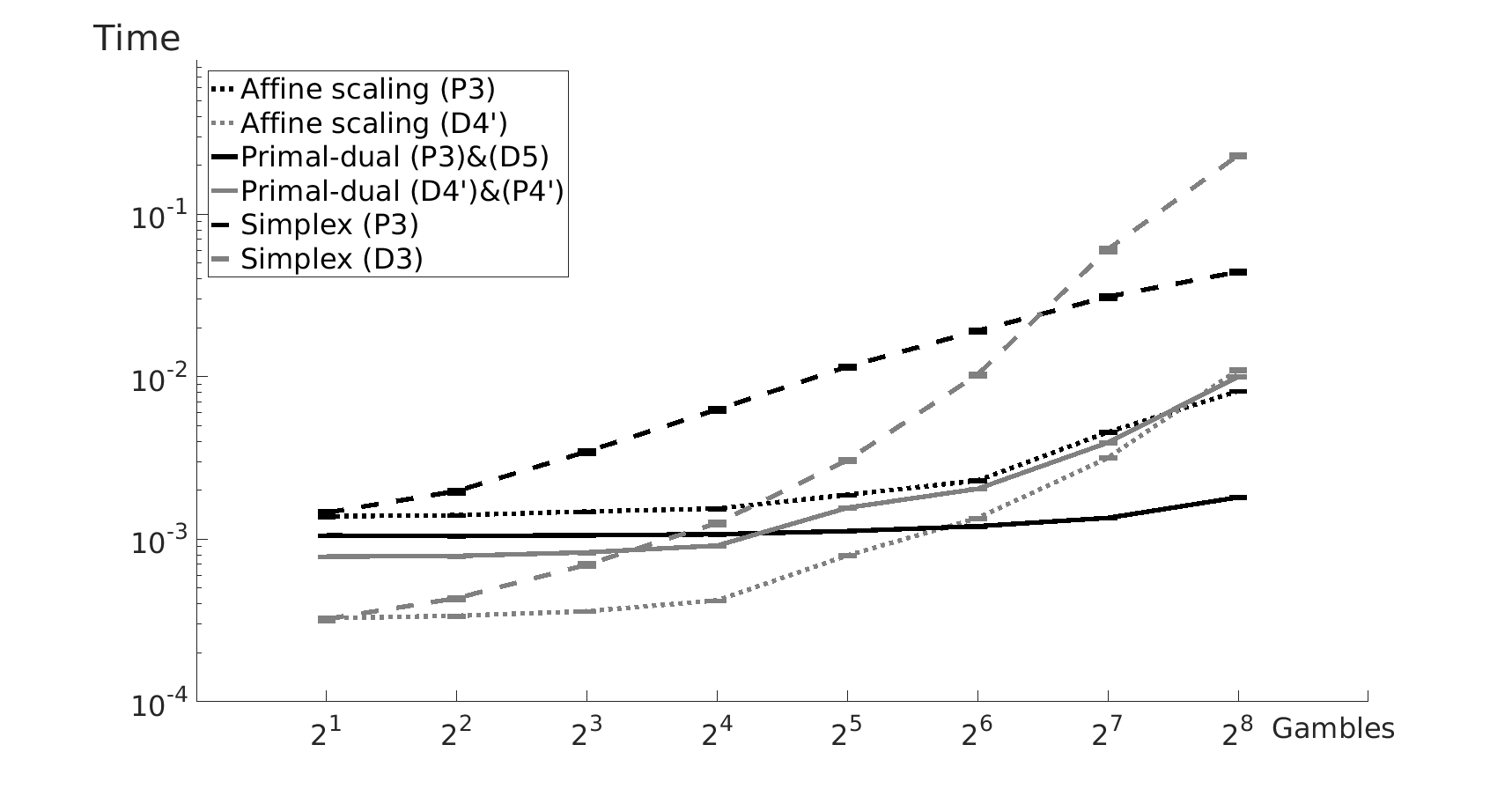}
  &
  \includegraphics[width=\hsize]{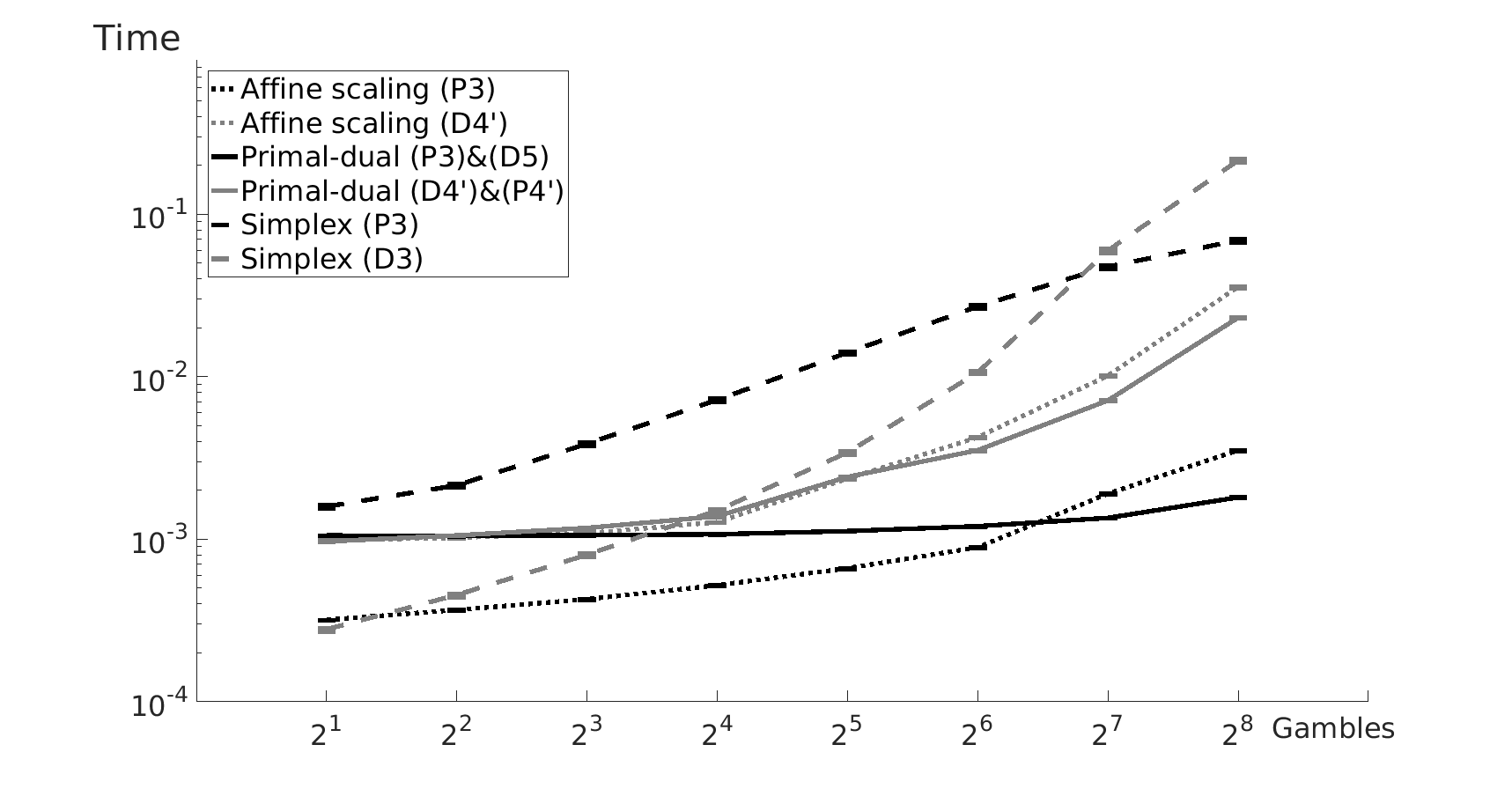}
  \\
  \rotatebox[origin=l]{90}{$|\Omega| = 2^6$}
  &
  \includegraphics[width=\hsize]{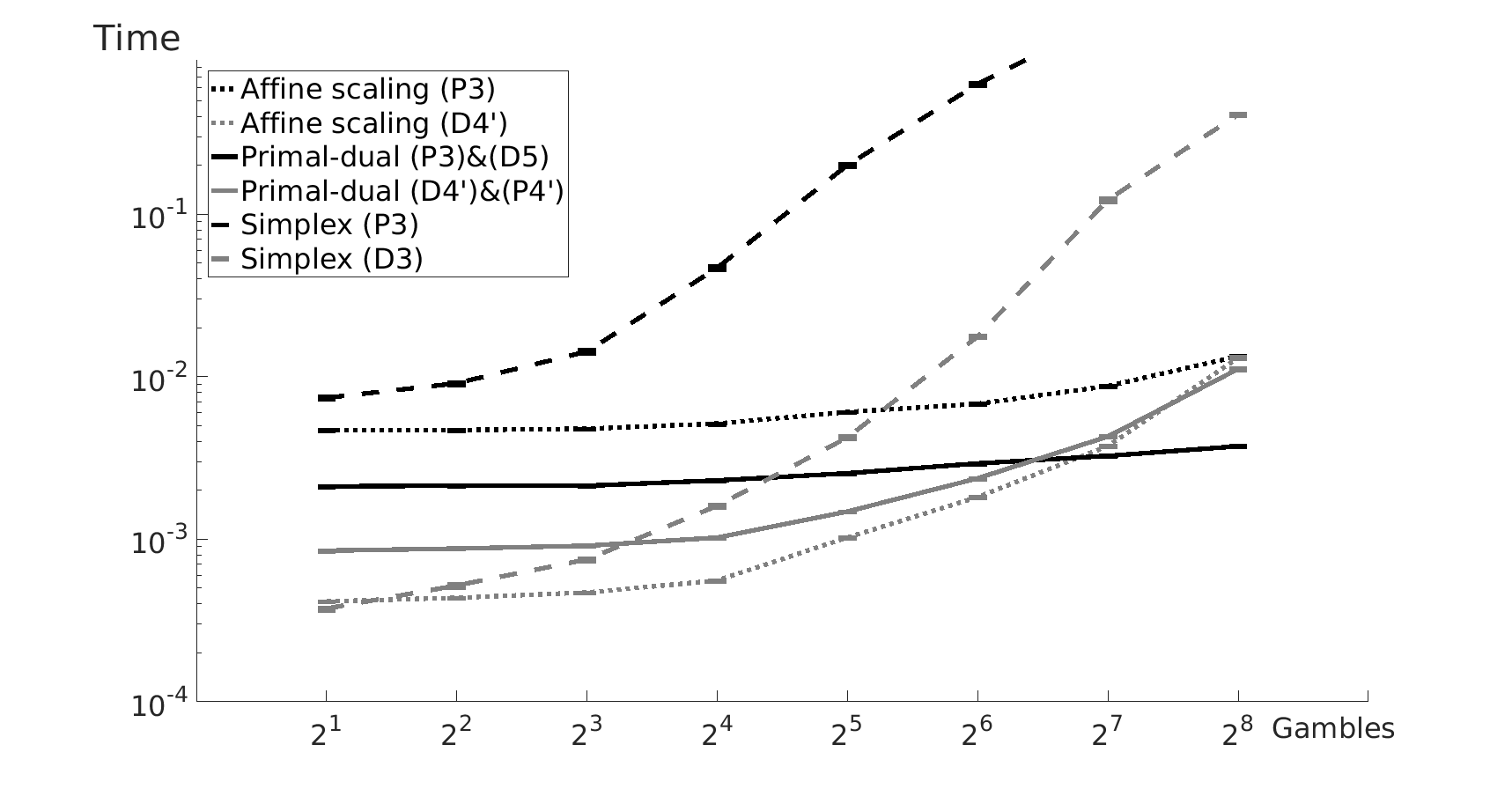}
  &
  \includegraphics[width=\hsize]{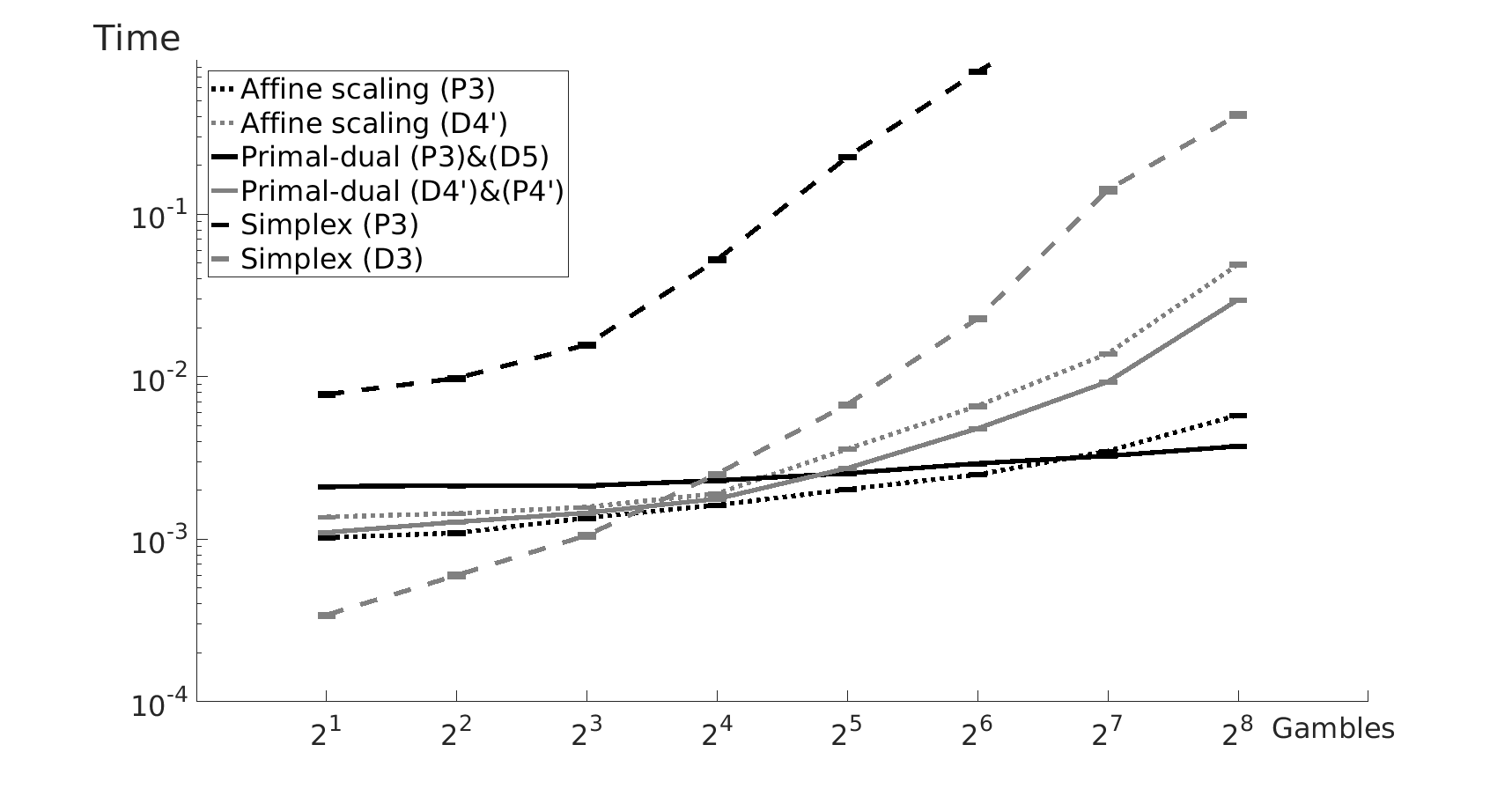}
  \\
  \rotatebox[origin=l]{90}{$|\Omega| = 2^8$}
  &
  \includegraphics[width=\hsize]{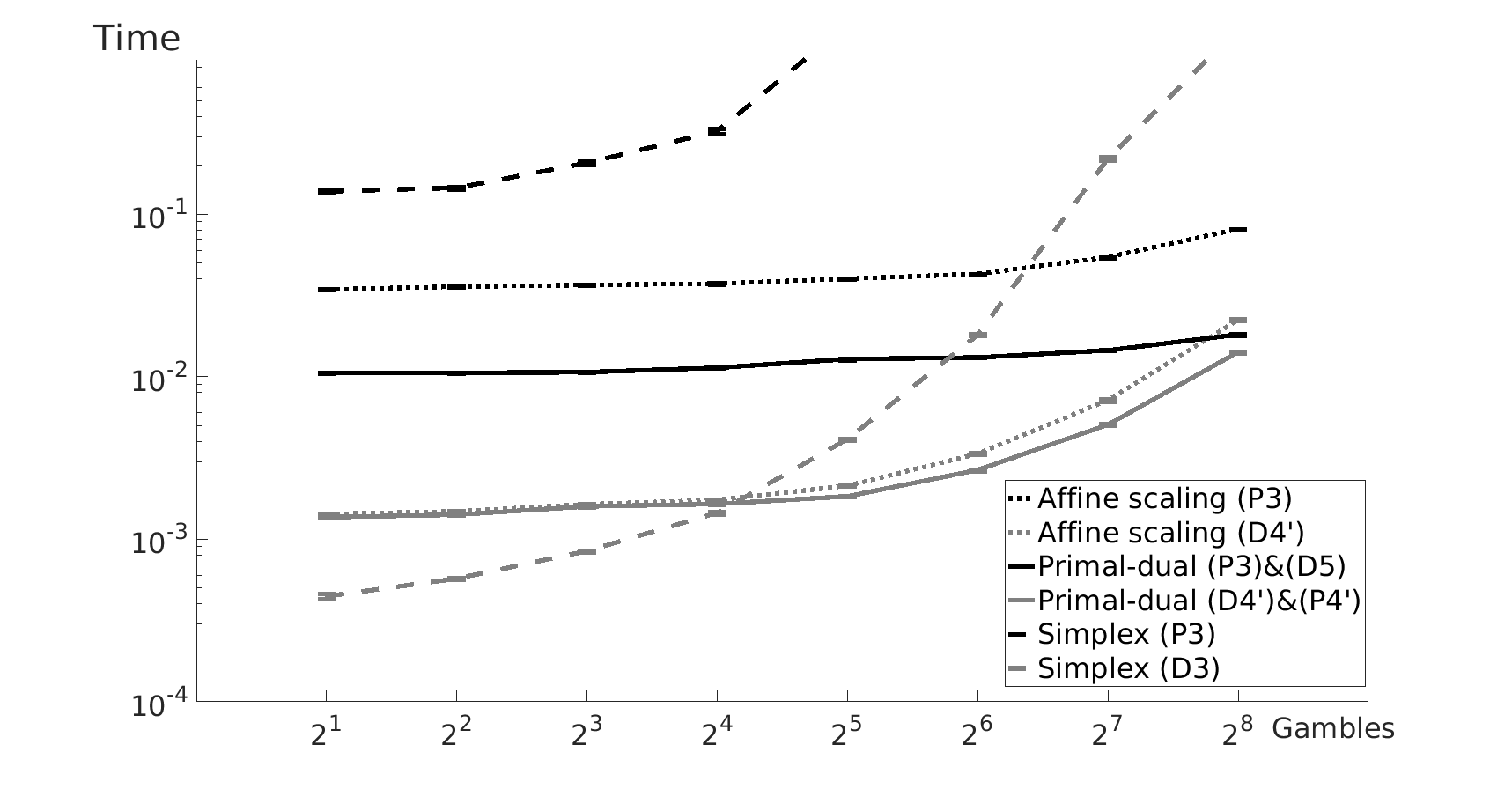}
  &
  \includegraphics[width=\hsize]{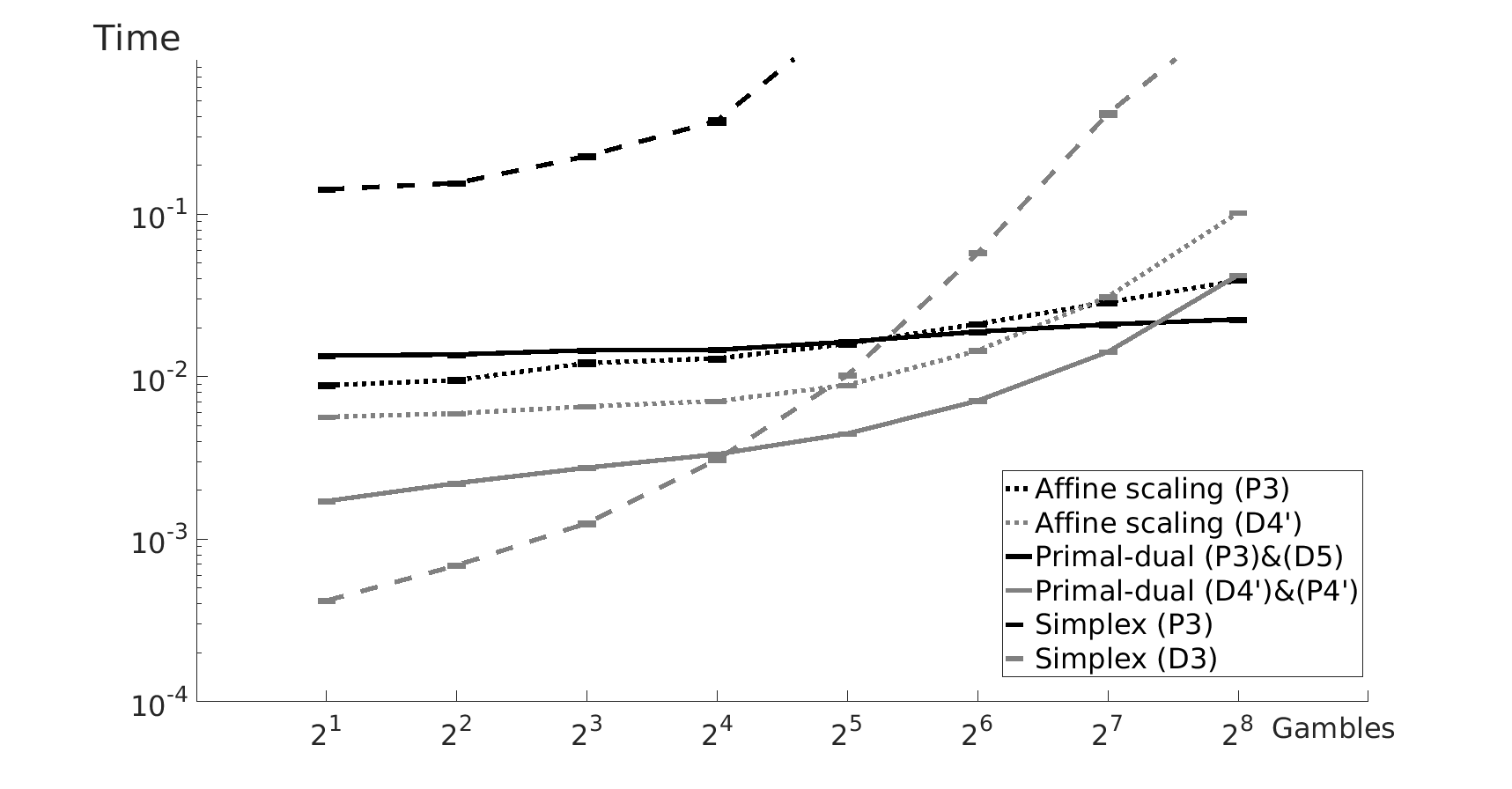}
\end{tabular}
\caption{Comparison plots of the average computational time for three methods. The left column avoids sure loss and the right column does not. Each row represents a different number of outcomes with vary the number of desirable gambles. The labels indicate linear programs solved by different methods.}
\label{fig:7}
\end{figure}

For each method, we run the algorithm twice to remove
any warm-up effects that can happen in the first run, and we only measure the corresponding computational
time taken in the second run. We repeat the process 1000 times and present a summary of the results in \cref{fig:6,fig:7}. 

 \Cref{fig:6,fig:7} show the average computational time taken during each method when checking avoiding sure loss. In the left column, the sets of desirable gambles avoid sure loss while in the right column,
they do not avoid sure loss. In \cref{fig:6} each row represents a different number of desirable gambles, and the horizontal axis represents the number of outcomes. In \cref{fig:7}, each row represents a different number of outcomes, and the horizontal axis represents the number of desirable gambles. In both figures, the vertical axis shows the computational time.
The computational time is averaged over 1000 random sets of desirable gambles. The error bars on the figures represent approximate 95\% confidence intervals on the mean computation time. These
are barely visible because of the large sample size, except in some rare cases where we observed large variability in the simplex method (possibly due to the numerical issues that we discussed earlier in the paper).

In the avoiding sure loss case, note that the sets of desirable gambles are always generated from coherent lower previsions. However, in some applied problems, this may not be the case. Therefore, we ran one further experiment where we introduced a negative bias, which removes coherence whilst still avoiding sure loss. Specifically, in Stage 3 of \cref{alg:ASL}, we set $\mathcal{D}\coloneqq\{f_i-\underline{E}(f_i)+\eta(f_i)\colon i\in\{1,\dots,n\}\}$, for some $\eta(f_i)>0$. 
We considered two scenarios: (i) we uniformly sampled each $\eta(f_i)$ from the open $(0,1)$ interval, and (ii) we fixed each $\eta(f_i)\coloneqq 0.01$. This made no practical difference. In particular, the plots in \cref{fig:6,fig:7} for the avoiding sure loss case remained nearly identical, with no change in general conclusions.

\section{Discussion and conclusion}\label{sec10}

In this study, we discussed and improved the simplex, the affine scaling and the primal-dual methods to efficiently solve linear programming problems for checking whether sets of desirable gambles avoid sure loss or not. To measure our improvements, we also gave several algorithms for generating random sets of desirable gambles that either avoid or do not avoid sure loss.

Building further from \citep{2017:Nakharutai:Troffaes:Caiado}, we studied linear programming problems and two improvements of these methods, namely, (i) an extra stopping criterion, and (ii) a simple and quick algorithm for finding feasible starting points in these methods. We compared the impact of these improvements on the three methods for solving linear programming problems.

These improvements benefit all applied problems as they reduce the computational burden of the original algorithms.
Our benchmarking study quantified these benefits for a wide range of situations.
In case of not avoiding sure loss, we tested the hardest case where only a
single gamble violates consistency. Any
positive computational gain in these cases implies an at least as large gain for more general applied cases where multiple gambles violate consistency.

According to our numerical results, the relative performance of the three methods depends on the number of desirable gambles and the number of outcomes. Specifically, if the number of outcomes is much larger than the number of desirable gambles, then solving either \linprogref{D3} or \linprogref{D4'} is faster than solving \linprogref{P3}. However, if the number of outcomes is much less than the number of desirable gambles, then we prefer to solve \linprogref{P3}. When the two numbers are of similar magnitude, there is no clear difference.

In the results, the primal-dual and the affine scaling methods outperform the simplex method in most cases. The simplex method can find a basic feasible starting point easily, but it cannot apply the extra stopping criterion. Therefore, the simplex method is not a good choice for checking avoiding sure loss.

On the other hand, the affine scaling and the primal-dual methods can benefit from these improvements. Specifically, when we solve \linprogref{P3}, these two methods can apply the extra stopping criterion and a simple mechanism to calculate feasible starting points. In this case, the primal-dual method performs very well, especially when we do not avoid sure loss and the number of desirable gambles is large.

When we solve \linprogref{D4'}, these two interior-point methods can easily find feasible starting points. In this case, the affine scaling method performs very well in small problems whilst the primal-dual method does better when the problems are bigger.  

Overall, if problems are small, then there is no big difference in the time taken to solve either \linprogref{P3} or \linprogref{D4'}, and there is also no big difference between the performance of the methods. When the problems are large, the primal-dual method is the best choice. In this case, if the number of desirable gambles is large, then we solve \linprogref{P3}, and if the number of outcomes is large, then we solve \linprogref{D4'}.

In future work, we will test our improved methods with some real applications and explore an algorithm for choosing $\omega_0$ in \cref{thm:3} for large problems.

\appendix

\section{Proofs}

\begin{proof}[Proof of \cref{thm:3}]
We only show that  $\mathcal{D}$ avoids sure loss if and only if the optimal value of \linprogref{P2} is zero since the proof that the dual problem, \linprogref{D2}, has feasible solutions follows immediately by the strong duality theorem \citep[p.59]{1993:Fang:Puthenpura}.

Firstly, by \cref{lem:1}, the optimal value of \linprogref{P2} is either zero or unbounded. Next, we show that if $\mathcal{D}$ avoids sure loss, then the optimal value of \linprogref{P2} is zero, and vice versa.
Note that \cref{eq:19} can be written as
\begin{equation}\label{eq6.45}
\sup_{\omega\in \Omega} \left( \sum_{i=1}^{n}\lambda_{i}f_{i}(\omega) \right) \leq \sum_{i=1}^{n} \lambda_{i} f_{i}(\omega_0)+\alpha.
\end{equation}  
Suppose $\mathcal{D}$ avoids sure loss, then by \cref{def:47}, for all $n$, all $\lambda_{1}$, \dots, $\lambda_{n} \geq 0$, and $f_{1}$, \dots, $f_{n} \in \mathcal{D}$, 
\begin{equation}\label{eq6.46}
 0 \leq \sup_{\omega \in \Omega} \left( \sum_{i=1}^{n} \lambda_{i}f_{i}(\omega) \right). 
\end{equation} 
So, by \cref{eq6.45},
\begin{equation}\label{eq6.47}
 0 \leq \sum_{i=1}^{n} \lambda_{i} f_{i}(\omega_0)+\alpha. 
\end{equation} 
So, the optimal value is non-negative.
Now, by putting $\lambda_{i} =0$ for all $i$, and $\alpha =0$, we obtain
\begin{equation}\label{eq6.48}
 \sum_{i=1}^{n} \lambda_{i} f_{i}(\omega_0)+\alpha = 0. 
\end{equation}
Therefore, the optimal value of \linprogref{P2} is zero.

Conversely, suppose $\mathcal{D}$ does not avoid sure loss. There are non-negative $\lambda_{1}$, \dots, $\lambda_{n}$ such that
\begin{equation}\label{eq6.49}
\sup_{\omega \in \Omega} \left( \sum_{i=1}^{n} \lambda_{i}f_{i}(\omega) \right) <  0 . 
\end{equation} 
Set \begin{equation}
s = \sup_{\omega \in \Omega} \left( \sum_{i=1}^{n} \lambda_{i}f_{i}(\omega) \right) 
\end{equation} 
and choose
\begin{equation}
\alpha = s - \sum_{i=1}^{n} \lambda_{i} f_{i}(\omega_0).
\end{equation}
Now we have $\alpha \geq 0$ and 
\begin{equation}\label{eq6.50}
\forall \omega \neq \omega_0\colon \sum_{i=1}^{n}\lambda_{i}f_{i}(\omega)  \leq \sum_{i=1}^{n} \lambda_{i} f_{i}(\omega_0)+\alpha = s < 0.
\end{equation}
This means that 
\begin{equation}\label{eq6.51}
\sum_{i=1}^{n} \lambda_{i} f_{i}(\omega_0)+\alpha < 0 
\end{equation}
is a feasible value of \linprogref{P2}. By \cref{lem:1}, the optimal value is unbounded.
\end{proof}

\section*{Acknowledgements}

We would like to acknowledge support for this project from the Development and Promotion of Science and Technology Talents Project (Royal Government of Thailand scholarship).

\bibliographystyle{plainnat}
\bibliography{references}

\begin{thebibliography}{17}
\providecommand{\natexlab}[1]{#1}
\providecommand{\url}[1]{\texttt{#1}}
\expandafter\ifx\csname urlstyle\endcsname\relax
  \providecommand{\doi}[1]{doi: #1}\else
  \providecommand{\doi}{doi: \begingroup \urlstyle{rm}\Url}\fi

\bibitem[Anstreicher(2009)]{Anstreicher:2009}
Kurt~M. Anstreicher.
\newblock Linear programming: Karmarkar projective algorithm.
\newblock In Christodoulos~A. Floudas and Panos~M. Pardalos, editors,
  \emph{Encyclopedia of Optimization, Second Edition}, pages 1889--1891.
  Springer, New York, USA, 2009.

\bibitem[Augustin et~al.(2014)Augustin, Coolen, De~Cooman, and
  Troffaes]{2014:itip}
Thomas Augustin, Frank P.~A. Coolen, Gert De~Cooman, and Matthias C.~M.
  Troffaes, editors.
\newblock \emph{Introduction to Imprecise Probabilities}.
\newblock Wiley Series in Probability and Statistics. Wiley, 2014.
\newblock ISBN 978-0-470-97381-3.
\newblock URL
  \url{http://eu.wiley.com/WileyCDA/WileyTitle/productCd-0470973811.html}.

\bibitem[Cunningham(1979)]{1979:Cunningham}
W.~H. Cunningham.
\newblock Theoretical properties of the network simplex method.
\newblock \emph{Mathematics of Operations Research}, 4\penalty0 (2):\penalty0
  196--208, 1979.
\newblock \doi{10.1287/moor.4.2.196}.

\bibitem[Fang and Puthenpura(1993)]{1993:Fang:Puthenpura}
Shu-Cherng Fang and Sarat Puthenpura.
\newblock \emph{Linear Optimization and Extensions: Theory and Algorithms}.
\newblock Springer Science+Business Media New York, 1993.

\bibitem[Goh and X.Q.Yang(2002)]{2002:Goh:Yang}
C.J. Goh and X.Q.Yang.
\newblock \emph{Duality in optimization and variational inequalities}.
\newblock Taylor and Francis, London, 2002.

\bibitem[Hillier and Lieberman(2001)]{2001:hillier}
Frederick~S. Hillier and Gerald~J. Lieberman.
\newblock \emph{Inroduction to operations research}.
\newblock McGraw-Hill, 2001.

\bibitem[Murty(2009)]{Murty:2009}
Katta~G. Murty.
\newblock Complexity of degeneracy.
\newblock In Christodoulos~A. Floudas and Panos~M. Pardalos, editors,
  \emph{Encyclopedia of Optimization, Second Edition}, pages 419--425.
  Springer, New York, USA, 2009.

\bibitem[Nakharutai et~al.(2017)Nakharutai, Troffaes, and
  Caiado]{2017:Nakharutai:Troffaes:Caiado}
N.~Nakharutai, M.~C.~M. Troffaes, and C.~C.~S. Caiado.
\newblock Efficient algorithms for checking avoiding sure loss.
\newblock \emph{Proceedings of Machine Learning Research}, 62:\penalty0
  241--252, 2017.

\bibitem[Quaeghebeur(2014)]{Quaeghebeur2014}
Erik Quaeghebeur.
\newblock \emph{A Propositional CONEstrip Algorithm}, pages 466--475.
\newblock Springer International Publishing, 2014.
\newblock ISBN 978-3-319-08852-5.
\newblock \doi{10.1007/978-3-319-08852-5_48}.

\bibitem[Strang(2002, Update 30 Aug 2002)]{2002:Strang}
Gilbert Strang.
\newblock Introduction to linear algebra.
\newblock \emph{MATLAB Central File Exchange}, 1.0, 2002, Update 30 Aug 2002.
\newblock URL
  \url{https://uk.mathworks.com/matlabcentral/fileexchange/2166-introduction-to-linear-algebra?focused=5039479&tab=function}.

\bibitem[Troffaes and
  de~Cooman(2014)]{2014:troffaes:decooman::lower:previsions}
Matthias C.~M. Troffaes and Gert de~Cooman.
\newblock \emph{Lower Previsions}.
\newblock Wiley Series in Probability and Statistics. Wiley, 2014.
\newblock ISBN 978-0-470-72377-7.
\newblock URL
  \url{http://eu.wiley.com/WileyCDA/WileyTitle/productCd-0470723777.html}.

\bibitem[Tsuchiya and Muramatsu(January 1992; revised September,
  1992)]{1992:Tsuchiya:Muramatsu}
T.~Tsuchiya and M.~Muramatsu.
\newblock Global convergence of a long-step affme scaling algorithm for
  degenerate linear programming problems.
\newblock \emph{Research Memorandum 423, The Institute of Statistical
  Mathematics, 4-6-7 Minami-Azabu, Minato-ku, Tokyo 106, Japan}, January 1992;
  revised September, 1992.

\bibitem[Vanderbei(2001)]{2001:Vanderbei}
Robert~J. Vanderbei.
\newblock \emph{Linear Programming: Foundations and Extensions, Second
  edition}.
\newblock Springer, 2001.

\bibitem[Walley(1991)]{1991:walley}
Peter Walley.
\newblock \emph{Statistical Reasoning with Imprecise Probabilities}.
\newblock Chapman and Hall, London, 1991.

\bibitem[Walley et~al.(2004)Walley, Pelessoni, and
  Vicig]{2004:walley:condlowprev}
Peter Walley, Renato Pelessoni, and Paolo Vicig.
\newblock Direct algorithms for checking consistency and making inferences from
  conditional probability assessments.
\newblock \emph{Journal of Statistical Planning and Inference}, 126:\penalty0
  119--151, 2004.

\bibitem[Williams(1975)]{1975:williams:condprev}
Peter~M. Williams.
\newblock Notes on conditional previsions.
\newblock Technical report, School of Math. and Phys. Sci., Univ. of Sussex,
  1975.

\bibitem[Williams(2007)]{2007:williams:condprev}
Peter~M. Williams.
\newblock Notes on conditional previsions.
\newblock \emph{International Journal of Approximate Reasoning}, 44\penalty0
  (3):\penalty0 366--383, 2007.
\newblock \doi{10.1016/j.ijar.2006.07.019}.

\end{thebibliography}

\end{document}